\newtheorem{lemma}{Lemma}[section]
\newtheorem{theorem}{Theorem}[section]
\newtheorem{definition}{Definition}[section]
\newtheorem{proposition}{Proposition}[section]
\numberwithin{equation}{section}
\begin{document}

\title{ Long-time dynamical behavior for a piezoelectric system with magnetic effect and nonlinear dampings \footnote{ Email addresses:  gongweiliu@haut.edu.cn (G. Liu), wangmengru97@163.com (M. Wang), dpymath@haut.edu.cn (P. Ding)}}

\author
{Gongwei Liu,\ Mengru Wang,\ Pengyan Ding\\
College of Science, Henan University of Technology, Zhengzhou 450001, China}

\date{}
\vskip 0.2cm
\maketitle
\noindent{\bf Abstract.}  This paper is concerned with the long-time dynamical behavior of a piezoelectric system with magnetic effect, which has nonlinear damping terms and external forces with a parameter. At first, we use the nonlinear semigroup theory to prove the well-posedness of solutions. Then, we investigate the properties of global attractors and the existence of exponential attractors. Finally, the upper semicontinuity of global attractors has been investigated.

\noindent {\bf Mathematics Subject Classification(2010): 35B40, 35B41, 35L20, 74K10}

\noindent {\bf Keywords.} Piezoelectric system; the well posedness; global attractor; upper semicontinuity of attractors.

\section{Introduction}
\setcounter{equation}{0}
In this paper, we investigate the piezoelectric beam system with magnetic effect:
\begin{equation}\label{1.1}
\begin{cases}
\ \rho v_{tt}-\alpha v_{xx}+\gamma\beta p_{xx}+f_{1}(v,p)+g_{1}(v_{t})=\varepsilon h_{1},~~~\quad(0,L)\times(0,T),\\[2mm]
\mu p_{tt}-\beta p_{xx}+\gamma\beta v_{xx}+f_{2}(v,p)+g_{2}(p_{t})=\varepsilon h_{2},~~~\quad(0,L)\times(0,T),\\[2mm]
\end{cases}
\end{equation}
where $\varepsilon \in [0,1], \alpha=\alpha_{1}+\gamma^{2}\beta$, and $\alpha,\rho,\gamma,\beta,\mu>0$ mean elastic stiffness, the mass density per unit volume, piezoelectric coefficient, water resistance coefficient, magnetic permeability, respectively,
which is supplemented by the following initial boundary conditions:
\begin{equation}\label{1.2}
\begin{cases}
v(0,t)=\alpha v_{x}(L,t)-\gamma \beta p_{x}(L,t)=0,~~~\quad t>0,\\[2mm]
p(0,t)=p_{x}(L,t)-\gamma v_{x}(L,t)=0,~~~~~~~~\quad t>0,\\[2mm]
v(x,0)=v_{0},\quad v_{t}(x,0)=v_{1},~~~~~~~~~~\quad 0<x<L,\\[2mm]
p(x,0)=p_{0},\quad p_{t}(x,0)=p_{1},~~~~~~~~~~\quad 0<x<L.\\[2mm]
\end{cases}
\end{equation}

It is well known that the ability to convert mechanical energy and electrical energy into each other is a characteristic of piezoelectric materials. In addition, it is  smaller, less expensive, and more efficient \cite{z2021}. Therefore, it has been applied in automotive \cite{Li2015, Xu2019}, medical \cite{Da2016, Dag2014}, space structures \cite{Bu2014, Ma2015} and many other fields \cite{Zh2015}. In the previous modeling stage, the magnetic effect was basically ignored due to its slight \cite{Ra2021}, but the latest results show that the magnetic effect may influence the performance of the system \cite{Sm2005}, so it is crucial to study piezoelectric beam models with magnetic effects. Nowadays, a growing number of scholars are beginning to do related research in this field.

The establishment of mathematical models of piezoelectric beams has a long history. In the early days, Tebou \cite{Te2006} and Haraux \cite{Ha1989} established the model of a single beam due to Maxwell equation \cite{Ma1982,Ma1998} and the dynamic interaction between electromagnetism is ignored. Therefore, they obtained  the following equation:
\begin{equation*}
\begin{cases}
\rho v_{tt}-\alpha_{1}v_{xx}=0,~~~~~~~~~~~~~~\qquad \qquad(x,t)\in(0,L)\times\mathbb{R}^+,\\[2mm]
 v(0,t)=\alpha_{1}v_{x}(L,t)+\delta v_{t}(L,t)=0,~\quad t\in \mathbb{R}^+.\\[2mm]
\end{cases}
\end{equation*}
Based on the magnetic effects and variational method, the equations of a single beam were derived by Morris and \"{O}zer \cite{Mo2013}, as follows:
\begin{equation}\label{1.3}
\begin{cases}
\rho v_{tt}-\alpha v_{xx}+\gamma\beta p_{xx}=0,~~\qquad \qquad \qquad \qquad \qquad (x,t)\in(0,L)\times\mathbb{R}^+,\\[2mm]
\mu p_{tt}-\beta p_{xx}+\gamma\beta v_{xx}=0,~~~\qquad \qquad \qquad \qquad \qquad (x,t)\in(0,L)\times\mathbb{R}^+,\\[2mm]
v(0,t)=p(0,t)=\alpha v_{x}(L,t)-\gamma\beta p_{x}(L,t)=0,~~~~\quad t\in\mathbb{R}^+,\\[2mm]
\beta p_{x}(L,t)+\frac{V(t)}{h}=0,~~~~~~~~~\qquad \qquad \qquad \qquad \qquad t\in\mathbb{R}^+.\\[2mm]
\end{cases}
\end{equation}
where $V(t)=p_{t}(L,t)$ denotes electrical feedback controller. Then, they established  the further results: the well-posedness of solution, strong stability of piezoelectric beam and so on \cite{Mo2014,z2015}.\\
\hspace*{0.7cm}In \cite{Ra2018}, Ramos et al. inserted a dissipative term $\delta v_{t}$ in the first equation of \eqref{1.3} and considered the following boundary condition
\begin{equation*}
\begin{cases}
v(0,t)=\alpha v_{x}(L,t)-\gamma\beta p_{x}(L,t)=0,~~~~\quad 0<t<T,\\[2mm]
p(0,t)=p_{x}(L,t)-\gamma v_{x}(L,t)=0,~~~~~~~~\quad 0<t<T.\\[2mm]
\end{cases}
\end{equation*}
The authors applied energy method to prove the energy of the system and estimated that the energy is exponentially stable. After that, Ramos et al. changed the boundary condition in \cite{Ra2019}:
\begin{equation*}
\begin{cases}
v(0,t)=\alpha v_{x}(L,t)-\gamma\beta p_{x}(L,t)+\xi_{1}\frac{v_{t}(L,t)}{h}=0,~~~~\quad 0<t<T,\\[2mm]
p(0,t)=p_{x}(L,t)-\gamma v_{x}(L,t)+\xi_{2}\frac{p_{t}(L,t)}{h}=0,~~~~~~~~\quad 0<t<T,\\[2mm]
\end{cases}
\end{equation*}
and proved the exponential stability regardless of any relationship between system coefficients and there is equivalent to exact observability at the boundary.

Moreover, we know there are many ways to control piezoelectric vibration, among which time-delayed feedback control is a common way to improve stability of the system. Therefore, more and more researchers studied the influence of time-delayed effect on the stability of hyperbolic systems, and also proved the so-called destabilizing effect.

Dakto et al. \cite{Da1978,Da1986,Da1988,Da1991} has obtained a series of research results in this direction. Among them, Dakto studied the equation as follows,
\begin{equation*}
u_{tt}-u_{xx}+2u_{t}(x,t-\tau)=0,~~\quad (x,t)\in(0,1)\times(0,+\infty),
\end{equation*}
where $\tau >0$, and proved that the time delay in the dissipative term charcatered by velocity term could  make the beams lose stability.\\
\hspace*{0.7cm}Therefore, it is necessary to add some controlled term to stabilize the hyperbolic systems.

In 2021, Freitas and Ramos \cite{Fr2021} considered the longitudinal vibration of the piezoelectric beams with thermal effect, magnetic effect and fraction damping:
\begin{equation*}
\begin{cases}
\rho v_{tt}-\alpha v_{xx}+\gamma\beta p_{xx}+\delta\theta_{x}+f_{1}(v,p)+v_{t}=h_{1},~~~\quad (0,L)\times(0,T),\\[2mm]
\mu p_{tt}-\beta p_{xx}+\gamma\beta v_{xx}+A^{\nu}p_{t}+f_{2}(v,p)=h_{2},~~~~~~~\quad (0,L)\times(0,T),\\[2mm]
c\theta_{t}-\kappa\theta_{xx}+\delta v_{tx}=0,~~~~~~~~~\qquad \qquad \quad \quad \qquad \qquad(0,L)\times(0,T),\\[2mm]
\end{cases}
\end{equation*}
the authors applied the semigroup theory to prove the well-posedness of solutions, then they showed that the existence of global attractor, exponential attractor and the upper semi-continuity of global attractor when $\nu\rightarrow0^{+}$.

Freitas et al. \cite{Fr2022} studied nonlinear piezoelectric beam system with delay term:
\begin{equation*}
\begin{cases}
\rho v_{tt}-\alpha v_{xx}+\gamma\beta p_{xx}+f_{1}(v,p)+v_{t}=h_{1},\\[2mm]
\mu p_{tt}-\beta p_{xx}+\gamma\beta v_{xx}+f_{2}(v,p)+\mu_{1}p_{t}+\mu_{2}p_{t}(x,t-\tau)=h_{2},\\[2mm]
\end{cases}
\end{equation*}
and proved the system is asymptotically smooth gradient, and then they used stability estimation to obtain that  the system is quasi-stable. Finally, they also proved the global attractor has finite fractal dimension.

Ma et al. \cite{Ma2020} studied long-time dynamics of semilinear wave equation:
\begin{equation*}
\partial_{t}^{2} u-\Delta u+a(x) g\left(\partial_{t} u\right)+f(u)=\varepsilon h(x) ~\quad \text { in } \Omega \times \mathbb{R}^{+},
\end{equation*}
where $\Omega\subset\mathbb{R}^{3}$ is a bounded domain with a smooth boundary $\partial \Omega$ and $\varepsilon\in[0,1]$. In this paper, they mainly proved the existence and properties of global attractors and attractors are continuous under autonomous perturbations.

Freitas et al. \cite{Fre2021} investigated the following wave equations:
\begin{equation*}
  \begin{cases}
  u_{t t}-\Delta u+(-\Delta)^{\alpha_{1}} u_{t}+g_{1}\left(u_{t}\right)=f_{1}(u, v)+\varepsilon h_{1}, & \text { in } \Omega \times \mathbb{R}^{+}, \\
  v_{t t}-\Delta v+(-\Delta)^{\alpha_{2}} v_{t}+g_{2}\left(v_{t}\right)=f_{2}(u, v)+\varepsilon h_{2}, & \text { in } \Omega \times \mathbb{R}^{+},
  \end{cases}
\end{equation*}
where $\Omega\subset\mathbb{R}^{2}$ is a bounded domain with smooth boundary $\partial \Omega$, $\alpha_{1},\alpha_{2}\in(0,1)$ and $\varepsilon\in[0,1]$. The authors showed that  the dynamical system is quasi-stable and that the family of the global attractors is the continuous with respect to the parameter.

Recently, Freitas and Ramos \cite{Frei2021} also considered a system modeling a mixture of three interacting continua with external forces with a parameter $\varepsilon\in[0,1]$,
and gave the smoothness of global attractors, the continuity of global attractors.

The main contributions of this paper are:

(1) We consider the piezoelectric system with nonlinear damping and external forces with a parameter, we prove the existence of solutions by maximal monotone operator theory.

(2) We establish a quasi-stability estimate to prove the system is quasi-stable and then we obtain the existence of exponential attractor and some properties of the global attractor.

(3) We consider the upper semicontinuity of global attractors with respect to the parameter $\varepsilon\in[0,1]$.

The present paper is organized  as following. In section 2, we mainly establish the well-posedness of the solution by nonlinear operator theory. In section 3, we apply the infinite dynamical system theory to prove the existence of global attractors and exponential attractors. In section 4, we mainly show the upper semi-continuity of the global attractors.

\section{Existence of global solutions}
\setcounter{equation}{0}

In this section, we first give some assumptions and notations. Then, we are concerned with well-posedness of the solution of the system \eqref{1.1}-\eqref{1.2} by the similar argument as in  \cite{Ch2013,Pa1983,Pei2014} and the references therein.

\subsection{Assumption}
Now, we give  some assumptions which  will be used hereinafter. And the following $C,C_{i}$ all denote positive generic constants, which may be different in various
lines.\\
(1) The function $F\in C^{2}(\mathbb{R}^2)$ satisfies
 \begin{equation}\label{2.1}
\nabla F=(f_{1},f_{2}),
\end{equation}
and there exists $\beta_{0}, m_{F}\geq 0$ such that
 \begin{equation}\label{2.2}
 F(v,p)\geq -\beta_{0}(|v|^{2}+|p|^{2})-m_{F},
\end{equation}
where
 \begin{equation}\label{2.3}
 0\leq \beta_{0}< \frac {1}{2\beta_{1}},
\end{equation}
and $\beta_{1}$ is the embedding constant defined in \eqref{3.6}. Moreover, there exist constant $r\geq1,C_{f}>0$ such that
 \begin{equation}\label{2.4}
 |f_{i}(v)-f_{i}(p)|\leq C_{f}(1+|v|^{r-1}+|p|^{r-1})|v-p|.
 \end{equation}
Furthermore, for arbitrary $v,p\in \mathbb{R}$, we get
\begin{equation}\label{2.5}
\nabla F(v,p)\cdot (v,p)-F(v,p)\geq -\beta_{0}(|v|^{2}+|p|^{2})-m_{F}.
\end{equation}
(2) The external forces $h_{1},h_{2}\in L^{2}(0,L)$, $\varepsilon\in[0,1]$.\\
(3) For the nonlinear damping $g_{i}(i=1,2)$, we have
 \begin{equation}\label{2.6}
g_{i}\in C^{1}(\mathbb{R}),\quad g_{i}(0)=0,
\end{equation}
and there exists $m,M_{1},q\geq1$,
 \begin{equation}\label{2.7}
 m\leq g'_{i}(v)\leq M_{1}(1+|v|^{q-1}),~~~~~\quad \forall v\in\mathbb{R}.
\end{equation}
If $q\geq3$, there exist $l>q-1,M_{2}>0$ such that
 \begin{equation}\label{2.8}
g_{i}(v)v\geq M_{2}|v|^{l},~~~\quad|v|\geq1.
\end{equation}
Furthermore, from \eqref{2.7}, we obtain
 \begin{equation}\label{2.9}
(g_{i}(p)-g_{i}(v))(p-v)\geq m|p-v|^{2},~~~\quad p,v\in \mathbb{R}.
\end{equation}

\subsection{The Cauchy problem}

 In the rest of this paper, we denote
\begin{equation*}
 \|v\|_{r}=\|v\|_{L^{r}(0,L)},\quad r\geq1, \qquad (v,p)_{2}=(v,p)_{L^{2}(0,L)}.
\end{equation*}
We shall define the Sobolev Space
\[H^{i}_{*}(0,L)=\{v\in H^{i}(0,L):v(0)=0\},~~\quad i=1,2.\]
Because of $v(0)=0$, we obtain the Poincar\'{e}'s inequality
 \begin{equation}\label{2.10}
\lambda_{1}\|v\|^{2}_{2}\leq \|v_{x}\|^{2}_{2},~~~\quad \forall v\in H^{1}_{*}(0,L),
\end{equation}
where $\lambda_{1}>0$. Hence,  we deduce $\|v\|_{H^{1}_{*}(0,L)}=\|v_{x}\|_{2}$.

The space $\mathcal{H}$ is defined by
 \begin{equation*}
\mathcal{H}=V\times H=(H^{1}_{*}(0,L))^{2}\times (L^{2}(0,L))^{2}.
\end{equation*}
The inner product on $\mathcal{H}$ is
\begin{equation*}
  (z,\tilde{z})_{\mathcal{H}}=\alpha_{1}\int_{0}^{L}{v_{x}\tilde{v}_{x}}dx+\beta\int_{0}^{L}{(\gamma v_{x}-p_{x})(\gamma \tilde{v}_{x}-\tilde{p}_{x})}dx+\rho\int_{0}^{L}{v_{t}\tilde{v}_{t}}dx+\mu\int_{0}^{L}
  {p_{t}\tilde{p_{t}}}dx,
\end{equation*}
where $z=(v,p,v_{t},p_{t})^{T},\tilde{z}=(\tilde{v},\tilde{p},\tilde{v_{t}},\tilde{p_{t}})^{T}$.\\
From the inner product, we can define the norm as
\begin{equation}\label{2.11}
  \|z(t)\|^{2}_{\mathcal{H}}=\|(v,p,\tilde{v},\tilde{p})^{T}\|^{2}_{\mathcal{H}}=\alpha_{1}\|v_{x}\|^{2}+\beta\|\gamma v_{x}-p_{x}\|^{2}+\rho\|\tilde{v}\|^{2}+\mu\|\tilde{p}\|^{2},
\end{equation}
Moreover, there exists a constant $\kappa>0 $ such that
 \begin{equation}\label{2.12}
\|v\|_{2}^{2}+\|p\|_{2}^{2}\leq \frac{1}{\lambda_{1}}(\|v_{x}\|_{2}^{2}+\|p_{x}\|_{2}^{2})\leq \kappa(\alpha_{1}\|v_{x}\|^{2}_{2}+\beta\|\gamma v_{x}-p_{x}\|^{2}_{2}),
\end{equation}
In fact, observing  that
$$\|p_{x}\|_{2}^{2}=\|\gamma v_{x}-p_{x}-\gamma v_{x}\|^{2}_{2}\leq2\|\gamma v_{x}-p_{x}\|^{2}_{2}+2\gamma^{2}\|v_{x}\|_{2}^{2},$$
we have
$$\|v_{x}\|_{2}^{2}+\|p_{x}\|_{2}^{2}\leq(1+2\gamma^{2})\|v_{x}\|_{2}^{2}+2\|\gamma v_{x}-p_{x}\|^{2}_{2}.$$
where $\kappa=\max\{(2\gamma^{2}+1)\alpha^{-1}_{1},2\beta^{-1}\}$, the inequality \eqref{2.12} holds. Combining the Poincar\'{e}'s inequality \eqref{2.10} and the above formula, there exists $\beta_{1}=\frac{\kappa}{\lambda_{1}}>0$ such that
 \begin{equation}\label{2.13}
\|v\|_{2}^{2}+\|p\|_{2}^{2}\leq \beta_{1}(\alpha_{1}\|v_{x}\|^{2}_{2}+\beta\|\gamma v_{x}-p_{x}\|^{2}_{2}).
\end{equation}
\hspace*{0.7cm}Let us write the system \eqref{1.1}-\eqref{1.2} as an equivalent Cauchy problem
 \begin{equation}\label{2.14}
\frac{d}{dt}z(t)+\mathcal{A}z(t)=\mathcal{F}z(t),~~~\quad z(0)=z_{0},
\end{equation}
where $z_{0}=(v_{0},p_{0},v_{1},p_{1})^{T}\in \mathcal{H},$ and $ \mathcal{A}:D(\mathcal{A})=\{(v,p,\tilde{v},\tilde{p})^{T}\in \mathcal{H}: v,p\in H^{2}(0,L),\tilde{v},\tilde{p}\in H^{1}_{*}(0, L), v_{x}(L)=p_{x}(L)=0\}\subset\mathcal{H}\hookrightarrow\mathcal{H}$ is defined by
\begin{equation*}
  \mathcal{A}z(t)=
\begin{pmatrix}
-\tilde{v}\\
-\tilde{p}\\
\frac{1}{\rho}(-\alpha v_{xx}+\gamma\beta p_{xx}+g_{1}(v_{t}))\\
\frac{1}{\mu}(-\beta p_{xx}+\gamma\beta v_{xx}+g_{2}(p_{t}))
\end{pmatrix}.
\end{equation*}
The function $\mathcal{F}:\mathcal{H}\rightarrow\mathcal{H}$ is defined by
\begin{equation}\label{2.15}
  \mathcal{F}(z)=
\begin{pmatrix}
0\\
0\\
\frac{1}{\rho}(\varepsilon h_{1}-f_{1}(v,p)\\
\frac{1}{\mu}(\varepsilon h_{2}-f_{2}(v,p)
\end{pmatrix}.
\end{equation}
By a simple calculation, we have
 \begin{equation}\label{2.16}
(\mathcal{A}z,z)_{\mathcal{H}}=\int_{0}^{L}{(g_{1}({\tilde{v}})\tilde{v}+g_{2}({\tilde{p}})\tilde{p})}dx\geq0,
\end{equation}

\subsection{Energy Identities}

\begin{definition}
Let $z(t)=(v,p,v_{t},p_{t})^{T}\in C([0,\infty),\mathcal{H})$ be  a weak solution of the system \eqref{1.1}-\eqref{1.2},
if for any  $\varphi,\psi\in H_{*}^{1}(0,L)$ it satisfies
\begin{gather*}
 \rho \frac{d}{dt}(v_{t},\varphi)_{2}+\mu\frac{d}{dt}(p_{t},\psi)_{2}+\alpha_{1}(v_{x},p_{x})_{2}+\beta(\gamma v_{x}-p_{x})_{2}(\gamma\varphi_{x}-\psi_{x})_{2}+(g_{1}(v_{t}),\varphi)_{2}\\+(g_{2}(p_{t}),\psi)_{2}+\int_{0}^{L}
 {f_{1}(v,p)\varphi}dx+\int_{0}^{L}{f_{2}(v,p)\psi}dx
=(\varepsilon h_{1},\varphi)_{2}+(\varepsilon h_{2},\psi)_{2}.
\end{gather*}
Moreover, if
\[z\in C([0,\infty);D(\mathcal{A}))\cap C^{1}([0,\infty);\mathcal{H}),\]
then $z$ is called the strong solution.
\end{definition}

The total energy of solutions of \eqref{1.1}-\eqref{1.2} is  defined by
 \begin{equation}\label{2.17}
\mathcal{E}(t)=E(t)+\int_{0}^{L}{F(v(t),p(t))}dx-\varepsilon\int_{0}^{L}{(h_{1}v(t)+h_{2}p(t))}dx,~~~\quad t\geq0,
\end{equation}
where $E(t)=\frac{1}{2}\|z(t)\|_{\mathcal{H}}^{2}$.

\begin{lemma} If $z=(v,p,v_{t},p_{t})^{T}$ is a strong solution of \eqref{1.1}-\eqref{1.2}, the following conclusions will hold,\\
(1)\begin{equation}\label{2.18}
  \begin{aligned}
    \frac{d}{dt}\mathcal{E}(t)&=-\int(g_{1}(v_{t})v_{t}+g_{2}(p_{t})p_{t})dx \\
     &\leq -m(\|v_{t}(t)\|_{2}^{2}+\|p_{t}(t)\|_{2}^{2}).
  \end{aligned}
\end{equation}
(2) There exist constants $\beta_{2},C_{F}>0$ such that
\begin{equation}\label{2.19}
 \beta_{2}\|z\|_{\mathcal{H}}^{2}-C_{F}\leq \mathcal{E}\leq C_{F}(1+\|z\|_{\mathcal{H}}^{r+1}),~~\quad t\geq0.
\end{equation}
\end{lemma}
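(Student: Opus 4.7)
\bigskip\noindent\textbf{Proof plan.} For part (1), since $z$ is a strong solution, I can legitimately test the two equations of \eqref{1.1} with $v_t$ and $p_t$ respectively and integrate over $(0,L)$. The elastic terms $-\alpha v_{xx}+\gamma\beta p_{xx}$ and $-\beta p_{xx}+\gamma\beta v_{xx}$ produce, after integration by parts using the boundary conditions \eqref{1.2} (which are tailored to kill the boundary contributions $\alpha v_x(L)-\gamma\beta p_x(L)$ and $p_x(L)-\gamma v_x(L)$ at $x=L$ and $v_t(0)=p_t(0)=0$ at $x=0$), exactly $\tfrac{d}{dt}E(t)$ via the decomposition $\alpha\|v_x\|_2^2-2\gamma\beta(v_x,p_x)_2+\beta\|p_x\|_2^2=\alpha_1\|v_x\|_2^2+\beta\|\gamma v_x-p_x\|_2^2$. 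The nonlinear source terms combine via \eqref{2.1} into $\frac{d}{dt}\int_0^L F(v,p)\,dx$, and the external forces give the last term in \eqref{2.17}. What remains is $\int_0^L(g_1(v_t)v_t+g_2(p_t)p_t)\,dx$, and since $g_i(0)=0$ and \eqref{2.7} yields $g_i(s)s=\int_0^1 g_i'(\tau s)s^2\,d\tau\geq m s^2$, the bound by $-m(\|v_t\|_2^2+\|p_t\|_2^2)$ follows.

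For the lower bound in part (2), I would start from the definition \eqref{2.17} and estimate each of the two non--quadratic pieces. By \eqref{2.2} and \eqref{2.13},
\begin{equation*}
\int_0^L F(v,p)\,dx\geq -\beta_0(\|v\|_2^2+\|p\|_2^2)-m_F L\geq -\beta_0\beta_1\bigl(\alpha_1\|v_x\|_2^2+\beta\|\gamma v_x-p_x\|_2^2\bigr)-m_F L.
\end{equation*}
For the forcing term I apply Cauchy--Schwarz, \eqref{2.13}, and Young's inequality with an arbitrarily small parameter $\eta>0$ to obtain $\varepsilon\int_0^L(h_1 v+h_2 p)\,dx\leq \eta\bigl(\alpha_1\|v_x\|_2^2+\beta\|\gamma v_x-p_x\|_2^2\bigr)+C_\eta(\|h_1\|_2^2+\|h_2\|_2^2)$. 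Since \eqref{2.3} guarantees $\tfrac12-\beta_0\beta_1>0$, I pick $\eta$ small enough that $\tfrac12-\beta_0\beta_1-\eta>0$, which produces $\mathcal{E}(t)\geq\beta_2\|z\|_{\mathcal{H}}^2-C_F$ for suitable $\beta_2,C_F>0$.

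For the upper bound in part (2), the key input is the growth condition \eqref{2.4}, which after integrating along the segment from $0$ to $(v,p)$ and using the $C^2$ regularity of $F$ implies $|F(v,p)|\leq C(1+|v|^{r+1}+|p|^{r+1})$. Therefore $\int_0^L F(v,p)\,dx\leq C(1+\|v\|_{r+1}^{r+1}+\|p\|_{r+1}^{r+1})$, and since $(0,L)\subset\mathbb{R}$, the one--dimensional Sobolev embedding $H^1_*(0,L)\hookrightarrow L^{r+1}(0,L)$ together with \eqref{2.13} gives $\|v\|_{r+1}^{r+1}+\|p\|_{r+1}^{r+1}\leq C\|z\|_{\mathcal{H}}^{r+1}$. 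Bounding $\varepsilon\int_0^L(h_1v+h_2p)\,dx$ by $C(1+\|z\|_{\mathcal{H}}^2)\leq C(1+\|z\|_{\mathcal{H}}^{r+1})$ (as $r\geq 1$) and absorbing $\tfrac12\|z\|_{\mathcal{H}}^2$ yields the desired upper estimate.

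The computations are essentially routine; the only delicate point is the lower bound, where one must carefully chain the three smallness conditions (Poincar\'e constant $\beta_1$ hidden in \eqref{2.13}, the structural assumption \eqref{2.3}, and the auxiliary Young parameter $\eta$) so that the coefficient of $\|z\|_{\mathcal{H}}^2$ stays strictly positive after all absorptions; this is the step I would be most cautious about.
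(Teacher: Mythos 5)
Your proposal is correct and follows essentially the same route as the paper: part (1) by testing with $v_t,p_t$ and using the monotonicity/lower bound $g_i(s)s\geq ms^2$ coming from \eqref{2.7}, and part (2) by combining \eqref{2.2} with \eqref{2.13} and a small Young parameter for the lower bound (the paper just fixes the parameter as $\beta_2/\beta_1$ with $\beta_2=\tfrac14(1-2\beta_0\beta_1)$ instead of a generic $\eta$), and the growth bound $|F|\leq C(1+|v|^{r+1}+|p|^{r+1})$ from \eqref{2.4} plus the one-dimensional embedding for the upper bound. No gaps.
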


\begin{proof} Multiplying the equations in \eqref{1.1} by $v_{t}$, $p_{t}$, respectively, and using integration by parts, and applying the inequality \eqref{2.9}, we can obtain \eqref{2.18}.\\
Applying \eqref{2.2} and \eqref{2.14}, we have
\begin{equation}\label{2.20}
  \begin{aligned}
    \int_{0}^{L}{F(v,p)}dx &\geq -\beta_{0}(\|v\|^{2}_{2}+\|p\|^{2}_{2})-Lm_{F} \\
    & \geq -\beta_{0}\beta_{1}(\alpha_{1}\|v_{x}\|^{2}_{2}+\beta\|\gamma v_{x}-p_{x}\|^{2}_{2})-Lm_{F}\\
     &\geq -\beta_{0}\beta_{1}\|(v,p,v_{t},p_{t})^{T}\|^{2}_{\mathcal{H}}-Lm_{F}.
  \end{aligned}
\end{equation}
Using \eqref{2.3} and \eqref{2.20},
\begin{equation*}
  \begin{aligned}
    \mathcal{E}(t) &= \int_{0}^{L}{F(v,p)}dx-\varepsilon  \int_{0}^{L}{(h_{1}v+h_{2}p)}dx +E(t) \\
    & \geq (\frac{1}{2}-\beta_{0}\beta_{1})\|z\|_{\mathcal{H}}^{2}-Lm_{F}-\varepsilon  \int_{0}^{L}{(h_{1}v+h_{2}p)}dx.
\end{aligned}
\end{equation*}
Let
\begin{equation}\label{2.21}
\beta_{2}=\frac{1}{4}(1-2\beta_{0}\beta_{1})>0,
\end{equation}
since $\varepsilon\in[0,1]$, we have
\begin{equation}\label{2.22}
  \varepsilon  \int_{0}^{L}{(h_{1}v+h_{2}p)}dx\leq \frac{\beta_{2}}{\beta_{1}}(\|v\|^{2}_{2}+\|p\|^{2}_{2})+\frac{\beta_{1}}{4\beta_{2}}(\|h_{1}\|^{2}_{2}+\|h_{2}\|^{2}_{2}),
\end{equation}
the first part of \eqref{2.19} is obtained with
\begin{equation*}
  C_{F}=Lm _{F}+\frac{\beta_{1}}{4\beta_{2}}(\|h_{1}\|^{2}_{2}+\|h_{2}\|^{2}_{2}).
\end{equation*}
Moreover, by \eqref{2.4} we have
\begin{equation}\label{2.23}
  \int_{0}^{L}{F(v,p)}dx\leq C(\|v_{x}\|^{r+1}_{2}+\|p_{x}\|^{r+1}_{2}+1).
\end{equation}
By \eqref{2.13} and \eqref{2.25}, we can deduce the second inequality in \eqref{2.19}.
\end{proof}

\subsection{Well-Posedness}

In this part, we will use the nonlinear operator theory to prove the well-posedness of solutions.
\begin{definition} Let
$X$ be a reflexive Banach space, the  operator $A: X\rightarrow X'$ is called monotone  if it satisfies
\[\langle Az^{1}-Az^{2},z^{1}-z^{2}\rangle\geq0,  \quad \forall z^{1},z^{2}\in D(A),\]
 Furthermore,
 if $(A+I): X\rightarrow X'\ is\ onto$, then $A$ is maximal.
 \end{definition}

\begin{definition}
 Let $X$ be a reflexive Banach space, the operator $B: X\rightarrow X'$ is called hemicontinuous, if
\[{\lim_{\lambda\to0}}\langle B(u+\lambda v),w\rangle=\langle Bu,w\rangle,~~~\quad \forall u,v,w\in X.\]
\end{definition}

\begin{lemma}
The operator $\mathcal{F}: \mathcal{H}\rightarrow\mathcal{H}$ defined in \eqref{2.15} is locally Lipschitz continuous.
\end{lemma}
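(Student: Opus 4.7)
The plan is to reduce everything to an $L^2$-estimate on the nonlinear terms. Since the first two components of $\mathcal{F}$ vanish and the forcing contributions $\varepsilon h_i$ cancel in the difference, for any $z_j=(v_j,p_j,\tilde v_j,\tilde p_j)^T\in\mathcal{H}$, $j=1,2$, the norm identity \eqref{2.11} gives
\begin{equation*}
\|\mathcal{F}(z_1)-\mathcal{F}(z_2)\|_{\mathcal{H}}^{2}=\tfrac{1}{\rho}\|f_1(v_1,p_1)-f_1(v_2,p_2)\|_{2}^{2}+\tfrac{1}{\mu}\|f_2(v_1,p_1)-f_2(v_2,p_2)\|_{2}^{2}.
\end{equation*}
So the task collapses to proving that $(v,p)\mapsto f_i(v,p)$ is locally Lipschitz from $(H^1_*(0,L))^2$ into $L^2(0,L)$.

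Next, I would use the polynomial growth condition \eqref{2.4}, extended to the two-variable setting via $\nabla F=(f_1,f_2)\in C^1$ (writing $f_i(v_1,p_1)-f_i(v_2,p_2)$ as a telescoping difference $[f_i(v_1,p_1)-f_i(v_2,p_1)]+[f_i(v_2,p_1)-f_i(v_2,p_2)]$ and applying \eqref{2.4} to each term), to obtain the pointwise bound
\begin{equation*}
|f_i(v_1,p_1)-f_i(v_2,p_2)|\le C_f\bigl(1+|v_1|^{r-1}+|v_2|^{r-1}+|p_1|^{r-1}+|p_2|^{r-1}\bigr)\bigl(|v_1-v_2|+|p_1-p_2|\bigr).
\end{equation*}
Squaring, integrating over $(0,L)$, and applying Hölder's inequality isolates factors like $\|v_j\|_\infty^{r-1}$ and $\|v_1-v_2\|_2$, after which the one-dimensional Sobolev embedding $H^1_*(0,L)\hookrightarrow L^\infty(0,L)$ controls every $L^\infty$-norm by the $H^1_*$-norm, and the norm-equivalence \eqref{2.12}--\eqref{2.13} converts those into $\|z_j\|_{\mathcal{H}}$.

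Combining the two estimates yields, for $z_1,z_2$ in the ball $\{\|z\|_{\mathcal{H}}\le R\}$,
\begin{equation*}
\|\mathcal{F}(z_1)-\mathcal{F}(z_2)\|_{\mathcal{H}}\le L(R)\,\|z_1-z_2\|_{\mathcal{H}},
\end{equation*}
with $L(R)=C(1+R^{r-1})$, which is precisely local Lipschitz continuity.

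The only subtlety is that the growth exponent $r\ge 1$ in \eqref{2.4} is arbitrary, so the argument must rely essentially on the one-dimensional embedding $H^1\hookrightarrow L^\infty$ rather than on a fixed Sobolev conjugate exponent; this is what absorbs the polynomial factors without any restriction on $r$. Apart from that, the argument is a routine combination of \eqref{2.4}, Hölder, and the embeddings already recorded in the preceding subsections.
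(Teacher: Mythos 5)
Your proposal is correct and follows essentially the same route as the paper: reduce to an $L^2$-bound on $f_i(v_1,p_1)-f_i(v_2,p_2)$, apply the growth condition \eqref{2.4} pointwise, and absorb the polynomial factors via the one-dimensional embedding $H^1_*(0,L)\hookrightarrow L^\infty(0,L)$ on the ball of radius $R$. You merely spell out the Hölder/embedding step and the telescoping extension of \eqref{2.4} to two variables, which the paper leaves implicit.
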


\begin{proof} Let the solutions $z^{1},z^{2}\in \mathcal{H}$ such that  $\|z^{1}\|_{\mathcal{H}},\|z^{2}\|_{\mathcal{H}}\leq \mathcal{R}$, where $\mathcal{R}>0$.\\Then, we can deduce
\begin{equation*}
  \|\mathcal{F}(z^{1})-\mathcal{F}(z^{2})\|^{2}_{\mathcal{H}}=\frac{1}{\rho}\int_{0}^{L}{|f_{1}(v^{1},p^{1})-f_{1}(v^{2},p^{2})|^{2}}dx
  +\frac{1}{\mu}\int_{0}^{L}{|f_{2}(v^{1},p^{1})-f_{2}(v^{2},p^{2})|^{2}}dx.
\end{equation*}
By \eqref{2.4},
\begin{equation}\label{2.24}
  |f_{i}(v^{1},p^{1})-f_{j}(v^{2},p^{2})|^{2}\leq C_{f}(|v^{1}|^{r-1}+|p^{1}|^{r-1}+|v^{2}|^{r-1}+|p^{2}|^{r-1}+1)^{2}(|v^{1}-v^{2}|^{2}+|p^{1}-p^{2}|^{2}).
\end{equation}
It follows from  \eqref{2.24} that there exists some constant $C_{\mathcal{R}}>0$ such that
\begin{equation*}
  \int_{0}^{L}{|f_{i}(v^{1},p^{1})-f_{j}(v^{2},p^{2})|^{2}}dx\leq C_{\mathcal{R}}\|z^{1}-z^{2}\|^{2}_{\mathcal{H}},~~\quad i=1,2.
\end{equation*}
which implies that
\begin{equation*}
  \|\mathcal{F}(z^{1})-\mathcal{F}(z^{2})\|^{2}_{\mathcal{H}}\leq C_{\mathcal{R}}\|z^{1}-z^{2}\|_{\mathcal{H}}.
\end{equation*}
\end{proof}
Now, we are in the position to give the existence of the solutions.
\begin{theorem}
Suppose the assumptions \eqref{2.1}-\eqref{2.7} hold. If $z_{0}\in \mathcal{H}$, the system \eqref{1.1}-\eqref{1.2}  has a unique weak solution $zt()$ satisfies  $z\in C([0,\infty);\mathcal{H})$, and it depends continuously on  the initial data $z_{0}$. In addition, if $z_{0}\in D(\mathcal{A})$, the weak solution is strong solution.
\end{theorem}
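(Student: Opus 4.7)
The plan is to put the system (1.1)--(1.2) in the abstract Cauchy form (2.14) and apply the classical nonlinear semigroup theorem for a maximal monotone operator perturbed by a locally Lipschitz map, in the spirit of \cite{Ch2013, Pa1983, Pei2014}.

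My first step would be to verify that $\mathcal{A} : D(\mathcal{A}) \subset \mathcal{H} \to \mathcal{H}$ is maximal monotone. For monotonicity, I would compute directly: writing $z^i = (v^i, p^i, \tilde v^i, \tilde p^i)^T$ for $i = 1, 2$, an integration by parts using the boundary conditions built into $D(\mathcal{A})$ makes all the linear elastic cross terms telescope, leaving
\[
(\mathcal{A}z^1 - \mathcal{A}z^2,\, z^1 - z^2)_{\mathcal{H}} = \int_0^L \bigl[(g_1(\tilde v^1) - g_1(\tilde v^2))(\tilde v^1 - \tilde v^2) + (g_2(\tilde p^1) - g_2(\tilde p^2))(\tilde p^1 - \tilde p^2)\bigr]\, dx,
\]
which is nonnegative by (2.9). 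Maximality amounts to proving that $I + \mathcal{A}$ is surjective: for each $w = (w_1, w_2, w_3, w_4)^T \in \mathcal{H}$ I would eliminate $v = \tilde v + w_1$ and $p = \tilde p + w_2$ in the equation $(I + \mathcal{A})z = w$ to reduce it to a coupled nonlinear elliptic system for $(\tilde v, \tilde p) \in V := (H^1_*(0,L))^2$. The associated operator on $V$ is bounded, hemicontinuous (thanks to the one-dimensional embedding $H^1_*(0,L) \hookrightarrow L^\infty(0,L)$ combined with the growth (2.7)), strictly monotone, and coercive (using Poincaré (2.10), the equivalence (2.12), and the sign of $g_i$ already exploited in (2.16)). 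Browder's theorem then delivers existence, and standard elliptic regularity together with the natural boundary relations recovers $v, p \in H^2(0, L)$ with $v_x(L) = p_x(L) = 0$, so that $z \in D(\mathcal{A})$.

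For the second step I would invoke the preceding lemma, which already establishes local Lipschitz continuity of $\mathcal{F} : \mathcal{H} \to \mathcal{H}$. The classical Kato--Pazy theory for nonlinear evolutions of the form (2.14) then furnishes, for each $z_0 \in \mathcal{H}$, a unique mild (hence weak, in the sense of Definition 2.3) solution $z \in C([0, T_{\max}); \mathcal{H})$, and a strong solution $z \in C([0, T_{\max}); D(\mathcal{A})) \cap C^1([0, T_{\max}); \mathcal{H})$ when $z_0 \in D(\mathcal{A})$. Continuous dependence on initial data is then immediate: for two solutions starting in a bounded set of $\mathcal{H}$, testing the difference of (2.14) in $\mathcal{H}$ and using monotonicity of $\mathcal{A}$ together with the local Lipschitz bound on $\mathcal{F}$ gives $\tfrac{d}{dt}\|z^1 - z^2\|_{\mathcal{H}}^2 \leq C_R \|z^1 - z^2\|_{\mathcal{H}}^2$, and Gronwall closes the estimate.

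To pass from local to global solutions I would argue by contradiction: if $T_{\max} < \infty$, then $\|z(t)\|_{\mathcal{H}} \to \infty$ as $t \to T_{\max}^-$, but the energy identity (2.18) yields $\mathcal{E}(t) \leq \mathcal{E}(0)$ throughout, and the two-sided bound (2.19) then forces $\beta_2 \|z(t)\|_{\mathcal{H}}^2 \leq \mathcal{E}(0) + C_F$, a contradiction. The main obstacle will be the maximality step: the cross-diffusion terms $\gamma\beta p_{xx}$ and $\gamma\beta v_{xx}$ genuinely couple the two equations, so coercivity must be checked in the twisted inner product (2.11) rather than in $\|v_x\|_2^2 + \|p_x\|_2^2$, and the polynomial growth of $g_i$ from (2.7) has to be absorbed through the one-dimensional embedding $H^1_*(0,L) \hookrightarrow L^\infty(0,L)$ before Browder's theorem can be applied.
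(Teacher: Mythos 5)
Your proposal follows essentially the same route as the paper: monotonicity of $\mathcal{A}$ via \eqref{2.9}, maximality by eliminating the displacement variables and solving the reduced monotone, hemicontinuous, coercive problem on $V$ (the paper invokes Barbu's surjectivity theorem where you invoke Browder--Minty, which is the same result), local existence from the maximal monotone plus locally Lipschitz perturbation framework, globality from \eqref{2.18}--\eqref{2.19}, and continuous dependence by Gronwall. The argument is correct and matches the paper's proof in all essential respects.
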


\begin{proof} Similar to \eqref{2.15} and \eqref{2.16}, using the monotonicity of function $g_{i}, i=1,2$, we know $\mathcal{A}$ is monotone. For purpose of obtain $\mathcal{A}$ is a maximal monotone operator, we need to show that
there exists $z\in D(\mathcal{A})$, for arbitrary $w\in \mathcal{H}$ such that
\begin{equation*}
  \mathcal{A}z+z=w.
\end{equation*}
In fact, in  the following, we decompose the operator $\mathcal{A}$ as
\[\mathcal{A}=\left(
                \begin{array}{cc}
                  0 & -I \\
                  B & G \\
                \end{array}
              \right),
\]
where $B: D(B)=\{(v,p)\in H_{*}^{2}(0,L)\times H_{*}^{2}(0,L)\}\subset V\hookrightarrow H$,
\[B(v,p)=\left(
           \begin{array}{c}
             \frac{1}{\rho}(-\alpha v_{xx}+\gamma\beta p_{xx}) \\
             \frac{1}{\mu}(-\beta p_{xx}+\gamma\beta v_{xx}) \\
           \end{array}
         \right),
\]
 $G: H\rightarrow H$ and
\[G(v_{t},p_{t})=\left(
                   \begin{array}{c}
                     \frac{1}{\rho}g_{1}(v_{t}) \\
                     \frac{1}{\mu}g_{2}(p_{t}) \\
                   \end{array}
                 \right).
\]
 Writing $z=(v,p,v_{t},p_{t})^{T}=(\tau,\eta)^{T}\in V\times H$, $w=(w_{1},w_{2})^{T}\in V\times H$, so
 \begin{equation*}
   \tau-\eta=w_{1}, ~~~~~~\qquad B(\tau)+G(\eta)+\eta=w_{2},
 \end{equation*}
 then, we can analyze that $\eta\in V$.
 Hence, we need to prove $K(\eta)=(B+I)\eta+G(\eta): V\rightarrow V'$ is onto. By Corollary 2.2 of \cite{Ba2010}, we need only to prove $K$ is maximal monotone and coercive. From \eqref{2.11} and the embedding $V\hookrightarrow H=H'\hookrightarrow V'$ such that
 \begin{equation*}
 \langle B\tau,\eta\rangle=(\tau,\eta)_{V},\quad \tau,\eta\in V.~~~\qquad
   \langle \tau,\eta\rangle=(\tau,\eta)_{H},\quad \tau\in H,\eta\in V.
 \end{equation*}
 \hspace*{0.7cm}Firstly, let $\eta^{1},\eta^{2}\in V$, then
 \begin{equation}\label{2.25}
   \begin{aligned}
   \langle(B+I)(\eta^{1}-\eta^{2}),\eta^{1}-\eta^{2}\rangle &=\langle B(\eta^{1}-\eta^{2}),\eta^{1}-\eta^{2}\rangle+\langle\eta^{1}-\eta^{2},\eta^{1}-\eta^{2}\rangle \\
  &=\|\eta^{1}-\eta^{2}\|_{V}^{2}+\|\eta^{1}-\eta^{2}\|_{H}^{2}\geq 0.
\end{aligned}
 \end{equation}
 By \eqref{2.9}, we have
 \begin{equation}\label{2.26}
   \langle G(\eta^{1})-G(\eta^{2}),\eta^{1}-\eta^{2}\rangle=(G(\eta^{1})-G(\eta^{2}),
   \eta^{1}-\eta^{2})_{H}\geq0.
 \end{equation}
So we obtain that $B+I$ and $G$ are monotone.\\
\hspace*{0.7cm}Secondly, let $\tau,\eta,\phi\in V $ where $\tau=(\tau^{1},\tau^{2},\tau^{3}),\eta=(\eta^{1},\eta^{2},\eta^{3}),
\phi=(\phi^{1},\phi^{2},\phi^{3})$, and $\lambda\in \mathbb{R},\lambda_{n}\rightarrow0$, we have
\begin{equation*}
  |\langle (B+I)(\tau+\lambda \eta),\phi\rangle-\langle(B+I)\tau,\phi\rangle|=|\lambda \langle(B+I)\eta,\phi\rangle|\leq |\lambda|(\langle By,\phi\rangle+\langle \eta,\phi\rangle),
\end{equation*}
which implies that the continuity of $\langle (B+I)(\tau+\lambda \eta),\phi\rangle$  at $\lambda=0$.
Then
\[\langle G(\tau+\lambda_{n}\eta),\phi\rangle=\sum_{i=1}^{3}\int_{0}^{L}{g_{i}(\tau^{i}+
\lambda_{n}\eta^{i})\phi^{i}}dx.\]
Clearly
\[g_{i}(\tau^{i}+\lambda_{n}\eta^{i})\phi^{i} \rightarrow g_{i}(\tau^{i})\phi^{i} \quad a.e\ in\ (0,L).\]
By \eqref{2.7} and Dominated Convergence Theorem, we deduce
\[\lim_{n\to\infty}\langle G(\tau+\lambda_{n}\eta),\phi\rangle=(G(\tau),\phi)_{H}.\]
So we obtain that $B+I$ and $G$ are hemicontinuous.

In addition, we can deduce $B+I$ and $G$ are coercive from \eqref{2.25} and \eqref{2.26}.

Therefore, according to Theorem 2.6 of \cite{Ba2010}, we know that $K$ is coercive and maximal monotone, which implies that  $K$ is onto. That is, $\mathcal{A}$ is maximal monotone in $\mathcal{H}$.

In conclusion, since $\mathcal{A}$ is maximal monotone and $\mathcal{F}$ is locally Lipschitz, by applying Theorem 7.2 of \cite{Ch2002}, we can obtain: when $z_{0}\in \mathcal{H}$, the problem \eqref{2.14} has a unique weak solution $z(t)\in C([0,t_{max});\mathcal{H})$. Moreover, if $t_{max}<\infty $, then $\lim\sup_{t\rightarrow t_{max}^{-}}\|z\|_{\mathcal{H}}=\infty$. When $z_{0}\in D(\mathcal{A})$, the problem \eqref{2.14} has a unique strong solution $z(t)\in C([0,t_{max}), D(\mathcal{A})),\ (t_{max}\leq\infty)$.

Now, we need to  show the existence of global solutions, that is $t_{\max}=\infty.$ In fact, let $z(t)$ be a strong solution defined in $[0, t_{\max})$. From \eqref{2.18}, we infer
\begin{equation}\label{2.27}
  \mathcal{E}(t)\leq\mathcal{E}(0).
\end{equation}
It follows from \eqref{2.19} and \eqref{2.17} that
\begin{equation*}
  \|z\|_{\mathcal{H}}^{2}\leq \frac{1}{\beta_{2}}(\mathcal{E}(0)+C_{F}).
\end{equation*}
By density argument, the conclusion for weak solution also holds. Therefore $t_{max}=\infty$.\\
Finally, let $z^{1},z^{2}$ be two weak solutions, by the standard arguments, for any $T>0$, there exists constant $C>0$ such that
\begin{equation}\label{2.28}
  \|z^{1}(t)-z^{2}(t)\|_{\mathcal{H}}^{2}\leq e^{CT}\|z^{1}(0)-z^{2}(0)\|_{\mathcal{H}}^{2}, \quad t\in [0,T].
\end{equation}
The proof is complete.
\end{proof}

\section{The existence of global attractor}
In this section, for  the sake of completeness,  we collect some known results in
 the theory of nonlinear dynamical systems (see \cite{Bu2008,Ch2010,Ro2001,Te1997}).

Let $(v,p,v_{t},p_{t})^{T}$ be the unique solution for the system \eqref{1.1}-\eqref{1.2}. We can define  the operator $S(t):\mathcal{H}\rightarrow \mathcal{H}$ by  $$S(t)(v_{0},p_{0},v_{1},p_{1})^{T}=(v(t),p(t),v_{t}(t),p_{t}(t))^{T},~~\quad t\geq0.$$ Hence, $(\mathcal{H},S(t))$ constitutes a dynamical system.

\begin{definition}
 Let $B\subset \mathcal{H}$ be a positively invariant set of a dynamical system $(\mathcal{H},S(t))$.

1. A function $\Phi(\mathfrak{y})$ is said to be a Lyapunov function, if $t\rightarrow \Phi(S(t)\mathfrak{y})$ is a non-increasing function for any $\mathfrak{y}\in B$.

2. A Lyapunov function is called strict, if there exist $t_{0}>0$, $\mathfrak{y}\in B$ such that $\Phi(S(t_{0})\mathfrak{y})=\Phi (\mathfrak{y})$, then $\mathfrak{y}=S(t)\mathfrak{y},\ t>0$.
\end{definition}
\begin{definition}
The dynamical system $(\mathcal{H},S_{t})$ is quasi-stable on $B\subset\mathcal{H}$, if there exist a compact seminorm $\chi_{X}(\cdot)$ on the space $X$ and nonnegative scalar function $\mathfrak{a}(t),\mathfrak{b}(t),\mathfrak{c}(t)\in\mathbb{R^{+}}$ satisfy:

(1) $\mathfrak{a}(t),\mathfrak{c}(t)$ are locally bounded on $[0,\infty)$;

(2) $\mathfrak{b}(t)\in L^{1}(\mathbb{R^{+}})$, and $\lim_{t\rightarrow\infty}\mathfrak{b}(t)=0$;

(3) for  any $ \mathfrak{y}_{1},\mathfrak{y}_{2}\in B$ and $t\geq0$, the estimates
\begin{equation*}
  \|S(t)\mathfrak{y}_{1}-S(t)\mathfrak{y}_{2}\|_{\mathcal{H}}^{2}\leq \mathfrak{a}(t)\|\mathfrak{y}_{1}-\mathfrak{y}_{2}\|_{\mathcal{H}}^{2},
\end{equation*}
and
\begin{equation*}
  \|S(t)\mathfrak{y}_{1}-S(t)\mathfrak{y}_{2}\|_{\mathcal{H}}^{2}\leq \mathfrak{b}(t)\|\mathfrak{y}_{1}-\mathfrak{y}_{2}\|_{\mathcal{H}}^{2}+
  \mathfrak{c}(t)\sup_{0\leq s\leq t}[\chi_{X}(\mathfrak{u}^{1}(s)-\mathfrak{u}^{2}(s))]^{2},
\end{equation*}
hold, where $S(t)=(\mathfrak{u}^{i}(t),\mathfrak{u}_{t}^{i}(t)), i=1,2$.
\end{definition}

\begin{lemma}
The dynamical system $(\mathcal{H},S(t))$ is gradient, that is, there exists a strict Lyapunov function $\Phi\in \mathcal{H}$. What's more,
$$\Phi(z)\rightarrow \infty \quad \Longleftrightarrow\quad \|z\|_{\mathcal{H}}\rightarrow \infty.$$
\end{lemma}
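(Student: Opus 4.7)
The natural candidate is $\Phi := \mathcal{E}$, the total energy introduced in \eqref{2.17}. To verify that $\Phi$ is a Lyapunov function, I would invoke the energy identity \eqref{2.18}: for strong solutions, $\frac{d}{dt}\mathcal{E}(S(t)z) \le -m(\|v_t\|_2^2 + \|p_t\|_2^2) \le 0$, so $t \mapsto \Phi(S(t)z)$ is non-increasing; this extends to weak solutions via the density argument already used in the proof of Theorem 2.1 together with the continuous dependence estimate \eqref{2.28}. Continuity of $\Phi$ on $\mathcal{H}$ is immediate since $E$ is a squared norm, the external-force contribution is linear and bounded, and $\int_0^L F(v,p)\,dx$ is continuous on $V$ via the 1D Sobolev embedding $H^1_*(0,L) \hookrightarrow C([0,L])$ combined with $F \in C^2(\mathbb{R}^2)$.

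For strictness, suppose $\Phi(S(t_0)z) = \Phi(z)$ for some $t_0 > 0$ and some $z \in \mathcal{H}$. Integrating \eqref{2.18} on $[0,t_0]$ yields
\begin{equation*}
 m \int_0^{t_0} \bigl(\|v_t(s)\|_2^2 + \|p_t(s)\|_2^2\bigr)\,ds \le \mathcal{E}(z) - \mathcal{E}(S(t_0)z) = 0,
\end{equation*}
so $v_t \equiv p_t \equiv 0$ a.e.\ on $(0,L) \times (0,t_0)$. Because $z \in C([0,\infty); \mathcal{H})$, the components $v(\cdot,t)$, $p(\cdot,t)$ are continuous in time with values in $H^1_*(0,L)$, hence the vanishing of their distributional time-derivatives forces them to be constant on $[0,t_0]$. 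Consequently $S(t)z = z$ for $t \in [0, t_0]$, and the semigroup identity $S(t+t_0)z = S(t)S(t_0)z$ propagates this equality inductively to every $t \ge 0$.

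The asymptotic equivalence $\Phi(z) \to \infty \iff \|z\|_{\mathcal{H}} \to \infty$ is then a direct consequence of the two-sided bound \eqref{2.19}: the lower estimate $\beta_2 \|z\|_{\mathcal{H}}^2 - C_F \le \mathcal{E}(z)$ gives $\|z\|_{\mathcal{H}} \to \infty \Rightarrow \Phi(z) \to \infty$, while the upper estimate $\mathcal{E}(z) \le C_F(1 + \|z\|_{\mathcal{H}}^{r+1})$ gives the converse by contraposition. The only point that demands care is the strictness step, where one must legitimately pass from the a.e.\ vanishing of $(v_t, p_t)$ in the weak-solution class to the pointwise fixed-point identity $S(t)z = z$; this is handled precisely by the $C([0,\infty); \mathcal{H})$-regularity of trajectories. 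Everything else reduces to directly quoting the energy identity and the two-sided energy bound already established.
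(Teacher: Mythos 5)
Your proposal is correct and follows essentially the same route as the paper: take $\Phi=\mathcal{E}$ from \eqref{2.17}, use the dissipation identity \eqref{2.18} for monotonicity and strictness, and use the two-sided bound \eqref{2.19} for the equivalence $\Phi(z)\to\infty\iff\|z\|_{\mathcal{H}}\to\infty$. Your strictness step is in fact slightly more careful than the paper's (you integrate over $[0,t_0]$ for a single $t_0$ and then propagate via the semigroup property, matching Definition 3.1 exactly, whereas the paper assumes equality for all $t\ge 0$), but this is a refinement, not a different approach.
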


\begin{proof}
Let $\mathcal{E}(t)$ defined in \eqref{2.17} be a Lyapunov function $\Phi$ and $z_{0}=(v_{0},p_{0},v_{1},p_{1})^{T}\in\mathcal{H}$, we can infer that $t\rightarrow \Phi(S(t)z_{0})$ is a non-increasing function from \eqref{2.18}.\\
\hspace*{0.7cm}Supposed that $\Phi(S(t)z_{0})=\Phi(z_{0})$, for $t\geq0$. Then
$$\|v_t(t)\|_{2}^{2}=\|p_t(t)\|_{2}^{2}=0,~~\quad t\geq0.$$
We obtain
$$v_{t}=p_{t}=0,\quad a.e \ in\ (0,L),~~~\quad t\geq0.$$
Consequently, $v_{t}(t)=v_{0},p_{t}(t)=p_{0}$, which implies $S(t)z_{0}=z(t)=(v_{0},p_{0},0,0)^{T}$ is a stationary solution of $(\mathcal{H},S(t))$.

From \eqref{2.19}, we obtain
\begin{equation*}
  \Phi(z)\leq C_{F}(1+\|z\|_{\mathcal{H}}^{r+1}),~~\quad t\geq0.
\end{equation*}
Let $\Phi(z)\rightarrow\infty$, we have $\|z\|_{\mathcal{H}}\rightarrow\infty$. Then, it follows from \eqref{2.19} that
\begin{equation*}
  \|z\|_{\mathcal{H}}^{2}\leq \frac{\Phi(z)+C_{F}}{\beta_{2}}.
\end{equation*}
As a result, we infer from  $\|z\|_{\mathcal{H}}\rightarrow\infty$ that $\Phi(z)\rightarrow \infty $.
\end{proof}

\begin{lemma}
The set of stationary points $\mathcal{N}$ of $S(t)$ is bounded in $\mathcal{H}$.
\end{lemma}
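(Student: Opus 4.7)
The plan is to identify an element $z = (v,p,v_t,p_t)^T \in \mathcal{N}$ with a solution of the associated elliptic system, test it against $(v,p)$, and close the estimate with the structural sign condition on $F$ together with the smallness hypothesis \eqref{2.3}.

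Repeating the stationarity argument from the preceding lemma, any $z \in \mathcal{N}$ satisfies $\Phi(S(t)z) = \Phi(z)$ for all $t \geq 0$, hence $v_t \equiv p_t \equiv 0$, and therefore $z = (v,p,0,0)^T$ where $(v,p) \in (H^{1}_{*}(0,L))^{2}$ solves
$$-\alpha v_{xx} + \gamma\beta p_{xx} + f_{1}(v,p) = \varepsilon h_{1},\qquad -\beta p_{xx} + \gamma\beta v_{xx} + f_{2}(v,p) = \varepsilon h_{2},$$
together with the boundary conditions $v(0) = p(0) = 0$, $\alpha v_{x}(L) - \gamma\beta p_{x}(L) = 0$ and $p_{x}(L) - \gamma v_{x}(L) = 0$. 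I would then multiply the first stationary equation by $v$, the second by $p$, integrate over $(0,L)$ and integrate by parts. The boundary contributions at $x = L$ take the form $[-\alpha v_{x}(L) + \gamma\beta p_{x}(L)]v(L)$ and $[-\beta p_{x}(L) + \gamma\beta v_{x}(L)]p(L)$, both of which vanish by \eqref{1.2}. Using $\alpha = \alpha_{1} + \gamma^{2}\beta$, the remaining interior quadratic form recombines into $\alpha_{1}\|v_{x}\|_{2}^{2} + \beta\|\gamma v_{x} - p_{x}\|_{2}^{2} = \|z\|_{\mathcal{H}}^{2}$, giving the key identity
$$\|z\|_{\mathcal{H}}^{2} + \int_{0}^{L}(f_{1}(v,p)v + f_{2}(v,p)p)\,dx = \varepsilon\int_{0}^{L}(h_{1}v + h_{2}p)\,dx.$$

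Next I would bound the nonlinear integral below. Combining \eqref{2.5} with \eqref{2.2} yields $\nabla F(v,p)\cdot(v,p) \geq F(v,p) - \beta_{0}(|v|^{2}+|p|^{2}) - m_{F} \geq -2\beta_{0}(|v|^{2}+|p|^{2}) - 2m_{F}$, and after integration
$$\int_{0}^{L}(f_{1}v + f_{2}p)\,dx \geq -2\beta_{0}(\|v\|_{2}^{2} + \|p\|_{2}^{2}) - 2Lm_{F}.$$
Inserting this into the identity, applying \eqref{2.13} to replace $\|v\|_{2}^{2}+\|p\|_{2}^{2}$ by $\beta_{1}\|z\|_{\mathcal{H}}^{2}$, and estimating the forcing term by Young's inequality with a small parameter $\delta > 0$, I obtain
$$\bigl(1 - (2\beta_{0} + \tfrac{\delta}{2})\beta_{1}\bigr)\|z\|_{\mathcal{H}}^{2} \leq 2Lm_{F} + \tfrac{1}{2\delta}(\|h_{1}\|_{2}^{2} + \|h_{2}\|_{2}^{2}).$$
By \eqref{2.3} we have $2\beta_{0}\beta_{1} < 1$, so picking $\delta$ sufficiently small makes the prefactor on the left strictly positive and yields a bound on $\|z\|_{\mathcal{H}}$ that is uniform in $z \in \mathcal{N}$ and in $\varepsilon \in [0,1]$.

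The only somewhat delicate point is the algebraic bookkeeping in the integration-by-parts step: the two boundary conditions at $x = L$ must be used together to cancel both boundary contributions, and the identity $\alpha = \alpha_{1} + \gamma^{2}\beta$ is needed to recover precisely the $\mathcal{H}$-norm. Once that identity is secured, the smallness condition \eqref{2.3} does all the remaining work via a standard absorbing argument, and no new assumptions on $g_{i}$ are required, since the damping plays no role at equilibria.
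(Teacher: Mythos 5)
Your proposal is correct and follows essentially the same route as the paper: test the stationary elliptic system against $(v,p)$, bound $\int(f_1v+f_2p)\,dx$ from below via \eqref{2.5} combined with \eqref{2.2} and \eqref{2.13}, and absorb the forcing term by Young's inequality using the smallness condition \eqref{2.3}. The only cosmetic difference is that the paper fixes the Young parameter as $\beta_2/\beta_1$ (yielding the explicit constant $3\beta_2$) where you keep a generic $\delta$, and you spell out the boundary-term cancellation that the paper leaves implicit.
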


\begin{proof} Let $z=(v,p,0,0)^{T}\in \mathcal{N}$ be the stationary solution of problem \eqref{1.1}-\eqref{1.2}. Then, we have the  following elliptic equations
\begin{equation}\label{3.1}
 \begin{cases}
-\alpha v_{xx}+\gamma\beta p_{xx}+f_{1}(v,p)=\varepsilon h_{1},\\
-\beta p_{xx}+\gamma\beta v_{xx}+f_{2}(v,p)=\varepsilon h_{2}.
\end{cases}
\end{equation}
Multiplying the equations in \eqref{3.1} by $v$, $p$, respectively, and integrating the result over $(0,L)$,  we have
\begin{equation*}
  \alpha_{1}\|v_{x}\|^{2}_{2}+\beta\|\gamma v_{x}-p_{x}\|^{2}_{2}=-
\int_{0}^{L}{(f_{1}(v,p)v+f_{2}(v,p)p)}dx+\varepsilon\int_{0}^{L}{(h_{1}v+h_{2}p)}dx.
\end{equation*}
Hence, using \eqref{2.1}, \eqref{2.2}, and \eqref{2.5}, we obtain
\begin{equation*}
  -\int_{0}^{L}{(f_{1}(v,p)v+f_{2}(v,p)p)}dx\leq 2\beta_{0}\beta_{1}(\alpha_{1}\|v_{x}\|^{2}_{2}+\beta\|\gamma v_{x}-p_{x}\|^{2}_{2})+2Lm_{F}.
\end{equation*}
By \eqref{2.21}, we have
\begin{equation*}
  4\beta_{2}(\alpha_{1}\|v_{x}\|^{2}_{2}+\beta\|\gamma v_{x}-p_{x}\|^{2}_{2})\leq 2Lm_{F}+\varepsilon\int_{0}^{L}{(h_{1}v+h_{2}p)}dx.
\end{equation*}
By Young's inequalities and \eqref{2.13}, we infer
\begin{equation*}
  \begin{aligned}
     \int_{0}^{L}{(h_{1}v+h_{2}p)}dx & \leq \frac{\beta_{2}}{\beta_{1}}(\|v\|_{2}^{2}+\|p\|_{2}^{2})+\frac{\beta_{1}}{4\beta_{2}} (\|h_{1}\|_{2}^{2}+\|h_{2}\|_{2}^{2})\\
      & \leq \beta_{2}(\alpha_{1}\|v_{x}\|^{2}_{2}+\beta\|\gamma v_{x}-p_{x}\|^{2}_{2})+\frac{\beta_{1}}{4\beta_{2}} (\|h_{1}\|_{2}^{2}+\|h_{2}\|_{2}^{2}).
   \end{aligned}
\end{equation*}
Therefore, we conclude
\begin{equation}\label{3.2}
  3\beta_{2}\|z\|^{2}_{\mathcal{H}} \leq 2Lm_{F}+\frac{\beta_{1}}{4\beta_{2}}(\|h_{1}\|^{2}_{2}+\|h_{2}\|^{2}_{2}).
\end{equation}
The proof is complete.
\end{proof}

\begin{lemma} Suppose the Assumption 2.1 holds. Let $B$ be a bounded forward invariant set in $\mathcal{H}$ and $S(t)z^{i}=(v^{i},p^{i},v_{t}^{i},p_{t}^{i})^{T}$ be a weak solution of the problem \eqref{1.1}-\eqref{1.2} with $z_{0}^{i}\in B, i=1,2$. Then there exist constant $\sigma,\varsigma,C_{B}>0$ independent of $\varepsilon$ such that
\begin{equation*}
  E(t)\leq \varsigma E(0)e^{-\sigma t}+C_{B}\sup_{s\in[0,t]}(\|v(s)\|^{2}_{2\theta}+\|p(s)\|^{2}_{2\theta}),\quad \forall t\geq0.
\end{equation*}
where $E(t)=\frac{1}{2}\|z\|_{\mathcal{H}}^{2}$, $v=v^{1}-v^{2}, p=p^{1}-p^{2},\theta\geq2$.
\end{lemma}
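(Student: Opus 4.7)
The plan is to establish the quasi-stability estimate by the classical perturbed-energy (Lyapunov) method. Subtracting the equations in \eqref{1.1} for $z^1$ and $z^2$, the difference $(v,p)=(v^1-v^2,p^1-p^2)$ satisfies a homogeneous system in which the forcing $\varepsilon h_i$ cancels exactly; this is precisely why the constants $\sigma,\varsigma,C_B$ will turn out to be independent of $\varepsilon$. Because $B$ is bounded in $\mathcal{H}$ and the embedding $H^1_*(0,L)\hookrightarrow L^\infty(0,L)$ is continuous in one space dimension, $\|v^i(t)\|_\infty+\|p^i(t)\|_\infty\leq C_B$ uniformly in $t\geq 0$.

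Testing the difference equations against $v_t$ and $p_t$ and invoking the monotonicity \eqref{2.9},
\[\frac{d}{dt}E(t)+m\bigl(\|v_t\|_2^2+\|p_t\|_2^2\bigr)\leq |(\Delta f_1,v_t)_2|+|(\Delta f_2,p_t)_2|,\]
where $\Delta f_i=f_i(v^1,p^1)-f_i(v^2,p^2)$. The growth condition \eqref{2.4} combined with the $L^\infty$ bound on $v^i,p^i$ yields the Lipschitz estimate $|\Delta f_i|\leq C_B(|v|+|p|)$ pointwise, so Young's inequality absorbs the right-hand side up to a remainder controlled by $C_B(\|v\|_{2\theta}^2+\|p\|_{2\theta}^2)$, after using $\|v\|_2\leq L^{1/2-1/(2\theta)}\|v\|_{2\theta}$ on the finite interval.

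To recover the potential-energy part $\alpha_1\|v_x\|_2^2+\beta\|\gamma v_x-p_x\|_2^2$, I introduce the multiplier functional $\Psi(t)=\rho(v_t,v)_2+\mu(p_t,p)_2$. Testing the difference equations against $v$ and $p$ and using $(v_{tt},v)_2=\frac{d}{dt}(v_t,v)_2-\|v_t\|_2^2$, I integrate by parts (the boundary conditions inherited from \eqref{1.2} force $v_x(L)=p_x(L)=0$, so the boundary terms vanish) and collect the mixed quadratic terms into $\beta\|\gamma v_x-p_x\|_2^2$, giving
\[\frac{d}{dt}\Psi(t)+\alpha_1\|v_x\|_2^2+\beta\|\gamma v_x-p_x\|_2^2=\rho\|v_t\|_2^2+\mu\|p_t\|_2^2-(\Delta g_1,v)_2-(\Delta g_2,p)_2-(\Delta f_1,v)_2-(\Delta f_2,p)_2,\]
with $\Delta g_1=g_1(v_t^1)-g_1(v_t^2)$ and $\Delta g_2=g_2(p_t^1)-g_2(p_t^2)$. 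The $\Delta f_i$-terms are again of lower order. The $\Delta g_i$-terms are estimated via the growth bound \eqref{2.7}, $|\Delta g_i|\leq M_1(1+|v_t^1|^{q-1}+|v_t^2|^{q-1})|v_t|$, then Hölder's inequality (placing $v$ or $p$ in $L^\infty$), then Young's to split into $\epsilon(\|v_t\|_2^2+\|p_t\|_2^2)$ plus a residual of the form $C_B(\|v\|_{2\theta}^2+\|p\|_{2\theta}^2)$.

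Finally, form the Lyapunov functional $\mathcal{L}(t)=NE(t)+\Psi(t)$ with $N$ sufficiently large that $c_1 E(t)\leq\mathcal{L}(t)\leq c_2 E(t)$. Combining the two estimates above with appropriate choices of $N$ and the Young parameters, and using \eqref{2.13} to dominate $\|v\|_2^2+\|p\|_2^2$ by the potential-energy norm, I obtain $\frac{d}{dt}\mathcal{L}(t)\leq-\sigma_0 E(t)+C_B(\|v(t)\|_{2\theta}^2+\|p(t)\|_{2\theta}^2)$. Replacing the pointwise lower-order term by its supremum over $[0,t]$ and applying Gronwall's inequality yields the stated bound. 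The main technical obstacle lies in the $(\Delta g_i,v)_2$ cross terms when $q\geq 3$: the factor $1+|v_t^i|^{q-1}$ is not controlled by the $\mathcal{H}$-bound alone, and one must invoke the stronger integrability provided by assumption \eqref{2.8} together with the dissipation inequality from Lemma 2.1 to obtain estimates that are uniform on the invariant set $B$.
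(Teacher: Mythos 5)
Your route is genuinely different from the paper's. You build a perturbed-energy functional $\mathcal{L}=NE+\Psi$ with $\Psi=\rho(v_t,v)_2+\mu(p_t,p)_2$ and aim for a pointwise differential inequality $\frac{d}{dt}\mathcal{L}\le-\sigma_0 E+C_B\Lambda(t)$, $\Lambda(t)=\|v(t)\|_{2\theta}^2+\|p(t)\|_{2\theta}^2$, followed by Gronwall. The paper instead proves a time-integrated observability estimate $\int_0^T E\,dt\le C(E(0)+E(T))+C_B\int_0^T\!\!\int(G_1v_t+G_2p_t)+C_T\sup_s\Lambda(s)$ over a fixed large window $[0,T]$, deduces $E(T)\le m_TE(0)+C_T\sup_s\Lambda(s)$ with $m_T<1$, and iterates over the intervals $[nT,(n+1)T]$. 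Both are standard paths to a quasi-stability estimate; your identity for $\frac{d}{dt}\Psi$ is correct, and in one space dimension the embedding $H^1_*(0,L)\hookrightarrow L^\infty(0,L)$ does simplify the $\Delta f_i$ terms relative to the paper's H\"older argument. The differential route, when it closes, gives the exponential rate more directly; the paper's integral route is better adapted to damping terms that are only controlled after integration in time --- which is exactly where your write-up runs into trouble.

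The gap is in the treatment of $(\Delta g_1,v)_2$ (and likewise $(\Delta g_2,p)_2$). First, for $q>1$ you cannot ``split into $\epsilon(\|v_t\|_2^2+\|p_t\|_2^2)$'': Young's inequality applied to $M_1(1+|v_t^1|^{q-1}+|v_t^2|^{q-1})|v_t|\,|v|$ produces $\delta\int(1+|v_t^1|^{q-1}+|v_t^2|^{q-1})|v_t|^2\,dx$, which is dominated neither by $\|v_t\|_2^2$ nor by a multiple of $(\Delta g_1,v_t)_2$ (the bound \eqref{2.9} goes the wrong way for that). The workable split, as in the paper, is $|\Delta g_1\cdot v|\le\frac12\,\Delta g_1\cdot v_t+\frac12 g_1'(\xi)|v|^2$ with $g_1'(\xi)\le M_1(1+|v_t^1|^{q-1}+|v_t^2|^{q-1})$, and the term $\frac12(\Delta g_1,v_t)_2$ is then absorbed by the contribution $-N(\Delta g_1,v_t)_2$ of $N\frac{d}{dt}E$, provided $N>\frac12$. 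Second, the residual $\int(1+|v_t^1|^{q-1}+|v_t^2|^{q-1})|v|^2\,dx$ must be H\"olderized against $\|v\|_{2\theta}^2$, and the resulting weight $\|v_t^i(t)\|_{L^{d(q-1)}}^{q-1}$ is bounded pointwise in $t$ only when $q<3$; for $q\ge3$ assumption \eqref{2.8} controls $\|v_t^i(t)\|_{L^{l}}^{l}$ only through $D(t):=\int_0^L(g_1(v_t^1)v_t^1+g_1(v_t^2)v_t^2)\,dx$, which the dissipation identity makes $L^1(0,\infty)$ but not $L^\infty$. You correctly name these ingredients as ``the main technical obstacle'' but do not execute the step; what your method actually yields is $\frac{d}{dt}\mathcal{L}\le-\sigma_0\mathcal{L}+C(1+D(t))\Lambda(t)$ with an unbounded time-dependent coefficient. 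The argument can still be closed --- Gronwall gives $\mathcal{L}(t)\le\mathcal{L}(0)e^{-\sigma_0t}+\sup_{s\le t}\Lambda(s)\int_0^te^{-\sigma_0(t-s)}C(1+D(s))\,ds$ and $\int_0^\infty D\,ds\le C_B$ uniformly in $\varepsilon\in[0,1]$ --- but this is precisely the step that makes the lemma nontrivial, and as written the proposal defers rather than proves it.
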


\begin{proof}
Let $F_{i}(v,p)=f_{i}(v^{1},p^{1})-f_{i}(v^{2},p^{2}), i=1,2, G_{1}(v_{t})=g_{1}(v_{t}^{1})-g_{1}(v_{t}^{2}),
G_{2}(p_{t})=g_{2}(p_{t}^{1})-g_{2}(p_{t}^{2})$.
Then $v=v^{1}-v^{2},p=p^{1}-p^{2}$ satisfy
\begin{equation}\label{3.3}
  \begin{cases}
\rho v_{tt}-\alpha v_{xx}+\gamma\beta p_{xx}+G_{1}(v_{t})=-F_{1}(v,p),\\[2mm]
\mu p_{tt}-\beta p_{xx}+\gamma\beta v_{xx}+G_{2}(v_{t})=-F_{2}(v,p),\\[2mm]
(v(0),p(0),v_{t}(0),p_{t}(0))=z^{1}-z^{2},\\[2mm]
v(0)=p(0)=v_{x}(L)=p_{x}(L)=0.
\end{cases}
\end{equation}
Multiplying the first equation of \eqref{3.3} by $v$, the second one by $p$, and integrating over $[0,L]\times [0,T]$,  we have
\begin{equation*}
  \begin{aligned}
    \int_{0}^{T}{E(t)}dt=&\int_{0}^{L}{(\rho v_{t}v+\mu p_{t}p)}dx\Big|_{0}^{T}+\int_{0}^{T}{(\rho\|v_{t}\|^{2}_{2}+\mu\|p_{t}\|^{2}_{2})}dt \\
    &-\frac{1}{2}\int_{0}^{T}\int_{0}^{L}{(G_{1}(v_{t})v)+G_{2}(p_{t})p)}dxdt-
  \frac{1}{2}\int_{0}^{T}\int_{0}^{L}{(F_{1}(v,p)v+F_{2}(v,p)p)}dxdt.
  \end{aligned}
\end{equation*}

 \textbf{Step 1.} By Poinc\'{a}re's and H\"{o}lder's inequalities, we have
\begin{equation*}
 \int_{0}^{L}{(\rho v_{t}v+\mu p_{t}p)}dx\leq {C}E(t).
\end{equation*}
Then
\begin{equation*}
  \int_{0}^{L}{(\rho v_{t}v+\mu p_{t}p)}dx\Big|_{0}^{T}\leq {C}(E(0)+E(T)),
\end{equation*}
where ${C}$ is a constant and ${C}>0$.
From \eqref{2.9}, we conclude
\begin{equation*}
  \int_{0}^{T}{(\rho\|v_{t}\|^{2}_{2}+\mu\|p_{t}\|^{2}_{2})}dt\leq \frac{1}{m}\int_{0}^{T}\int_{0}^{L}{(G_{1}(v_{t})v)+G_{2}(p_{t})p)}dxdt.
\end{equation*}

\textbf{Step 2.} According to Young's inequality and (2.9), we can deduce
\begin{equation*}
  \int_{0}^{T}\int_{0}^{L}{G_{1}(v_{t})v}dxdt\leq
  \frac{1}{2}\int_{0}^{T}\int_{0}^{L}{G_{1}(v_{t})v_{t}}dxdt+\frac{1}{2}\int_{0}^{T}\int_{0}^{L}
  {G_{1}(v_{t})\frac{|v|^{2}}{v_{t}}}dxdt.
\end{equation*}
Applying \eqref{2.7}, we infer
\begin{equation*}
  \int_{0}^{L}{G_{1}(v_{t})v}dx\leq \frac{1}{2}\int_{0}^{L}{G_{1}(v_{t})v_{t}}dx+
  \frac{M_{1}}{2}\int_{0}^{L}(1+|v_{t}^{1}|^{q-1}+|v_{t}^{2}|^{q-1})|v|^{2}dx.
\end{equation*}
For further estimation, we divide it  into three cases.

\textbf{Case a: $q=1$. } It is easy to get
\begin{equation*}
 \int_{0}^{L}(1+|v_{t}^{1}|^{q-1}+|v_{t}^{2}|^{q-1})|v|^{2}dx\leq
 3\|v\|_{2}^{2}(1+\int_{0}^{L}{(g_{1}(v_{t}^{1})v_{t}^{1}+g_{1}(v_{t}^{2})v_{t}^{2})}dx).
\end{equation*}

\textbf{Case b: $1<q<3$. }  Applying the assumption \eqref{2.9} and H\"{o}lder's inequality, we have
\begin{equation*}
  \begin{aligned}
    \int_{0}^{L}(1+|v_{t}^{1}|^{q-1}+|v_{t}^{2}|^{q-1})|v|^{2}dx & \leq {C}\|v\|_{2d_{1}}^{2}( \int_{0}^{L}{(1+|v_{t}^{1}|^{d(q-1)}+|v_{t}^{2}|^{d(q-1)})}dx)^{\frac{1}{d}}\\
     & \leq {C}\|v\|_{2d_{1}}^{2}(L+\int_{0}^{L}{(g_{1}(v_{t}^{1})v_{t}^{1}+g_{1}(v_{t}^{2})v_{t}^{2})}dx)^
     {\frac{1}{d}}\\
     & \leq {C}\|v\|_{2d_{1}}^{2}(L+\int_{0}^{L}{(g_{1}(v_{t}^{1})v_{t}^{1}+g_{1}(v_{t}^{2})v_{t}^{2})}dx),
  \end{aligned}
\end{equation*}
where $d=\frac{2}{q-1}$, and $\frac{1}{d}+\frac{1}{d_{1}}=1$.

\textbf{Case c: $q\geq3$.} By the similar argument as in  Case b, let $d=\frac{l}{q-1}>1$, we can have the result.\\
To sum up, for some ${C}>0,\theta\geq2$, we can infer
\begin{equation}\label{3.4}
  \int_{0}^{L}{G_{1}(v_{t})v}dx\leq \frac{1}{2}\int_{0}^{L}{G_{1}(v_{t})v_{t}}dx+ {C}\|v\|_{\theta}^{2}(L+\int_{0}^{L}{(g_{1}(v_{t}^{1})v_{t}^{1}+g_{1}(v_{t}^{2})v_{t}^{2})}dx).
\end{equation}
Similarly,
\begin{equation}\label{3.5}
  \int_{0}^{L}{G_{2}(p_{t})p}dx\leq \frac{1}{2}\int_{0}^{L}{G_{2}(p_{t})p_{t}}dx+ {C}\|p\|_{\theta}^{2}(L+\int_{0}^{L}{(g_{2}(p_{t}^{1})p_{t}^{1}+g_{2}(p_{t}^{2})p_{t}^{2})}dx).
\end{equation}
Furthermore, due to $z^{1},z^{2}\in B$, and by \eqref{2.18}, \eqref{2.19}, we conclude there exists ${C}_{B}>0$ such that
\begin{equation*}
  \int_{0}^{L}{(g_{1}(v_{t}^{1})v_{t}^{1}+g_{1}(v_{t}^{2})v_{t}^{2})}dx\leq {C}_{B},
\end{equation*}
\begin{equation*}
  \int_{0}^{L}{(g_{2}(p_{t}^{1})p_{t}^{1}+g_{2}(p_{t}^{2})p_{t}^{2})}dx\leq {C}_{B}.
\end{equation*}
Combining with \eqref{2.8}, \eqref{3.4}, \eqref{3.5}, and applying $L^{2\theta}(0,L)\hookrightarrow L^{\theta}(0,L)$,
we deduce
\begin{equation*}
 \begin{aligned}
   -\frac{1}{2}\int_{0}^{T}\int_{0}^{L}{(G_{1}(v_{t})v+G_{2}(p_{t})p)}dxdt & \leq \frac{1}{2}\int_{0}^{T}\int_{0}^{L}{(G_{1}(v_{t})v_{t}+G_{2}(p_{t})p_{t})}dxdt \\
    & +
  {C}_{T}\sup_{s\in[0,T]}(\|v(s)\|^{2}_{2\theta}+\|p(s)\|^{2}_{2\theta}).
 \end{aligned}
\end{equation*}

\textbf{Step 3.} Applying \eqref{2.4}, $H_{*}^{1}(\Omega)\hookrightarrow L^{r}(\Omega),r\in(1,\infty)$ and H\"{o}lder's inequality, we can infer
\begin{equation}\label{3.6}
  \begin{aligned}
    \int_{0}^{L}{F_{1}(v,p)v}dxdt&\leq C_{f}(\|v^{1}\|_{2\theta}^{r-1}+\|v^{2}\|_{2\theta}^{r-1}+\|p^{1}\|_{2\theta}^{r-1}+
    \|p^{2}\|_{2\theta}^{r-1})(\|v\|_{2\theta}+\|p\|_{2\theta})\|v\|_{2}\\
     & \leq {C}_{B}((\|v\|_{2\theta}+\|p\|_{2\theta})\|v\|_{2}\\
     & \leq {C}_{B}((\|v\|_{2\theta}^{2}+\|p\|_{2\theta}^{2}).
  \end{aligned}
\end{equation}
Similarly,  we obtain
\begin{equation}\label{3.7}
  \int_{0}^{L}{F_{2}(v,p)p}dxdt\leq {C}_{B}((\|v\|_{2\theta}^{2}+\|p\|_{2\theta}^{2}).
\end{equation}
Combining \eqref{3.7} with \eqref{3.6}, there exists ${C}_{T}>0$, we have
\begin{equation*}
  -\frac{1}{2}\int_{0}^{T}\int_{0}^{L}{(F_{1}(v,p)v+F_{2}(v,p)p)}dxdt\leq {C}_{T}\sup_{s\in[0,T]}(\|v(s)\|^{2}_{2\theta}+\|p(s)\|^{2}_{2\theta}).
\end{equation*}
Therefore, combining the above estimates, we have
\begin{equation}\label{3.8}
  \begin{aligned}
    \int_{0}^{T}{E(t)}dt & \leq {C}_{B}\int_{0}^{T}\int_{0}^{L}{(G_{1}(v_{t})v_{t}+G_{2}(p_{t})p_{t})}dxdt\\
    & +{C}_{T}\sup_{s\in[0,T]}(\|v(s)\|^{2}_{2\theta}+\|p(s)\|^{2}_{2\theta})+{C}(E(0)+E(T)).
  \end{aligned}
\end{equation}
for some constants  ${C}_{B},{C}_{T}>0$.\\
\textbf{Step 4.} Multiplying the equations of \eqref{3.3} by $v_{t},p_{t}$, respectively, and integrating over $[0,L]\times[s,T]$, we obtain
\begin{equation}\label{3.9}
  E(t)=E(s)-\int_{s}^{T}\int_{0}^{L}{(G_{1}(v_{t})v_{t}+G_{2}(p_{t})p_{t})}dxdt-
  \int_{s}^{T}\int_{0}^{L}{(F_{1}(v,p)v_{t}+F_{2}(v,p)p_{t})}dxdt.
\end{equation}
Due to
\begin{equation*}
 -\int_{s}^{T}\int_{0}^{L}{(G_{1}(v_{t})v_{t}+G_{2}(p_{t})p_{t})}dxdt\leq0,
\end{equation*}
we obtain
\begin{equation*}
  E(t)\leq E(s)-
  \int_{s}^{T}\int_{0}^{L}{(F_{1}(v,p)v_{t}+F_{2}(v,p)p_{t})}dxdt.
\end{equation*}
By the similar argument, we have
\begin{equation*}
 \begin{aligned}
   -\int_{0}^{L}{F_{1}(v,p)v_{t}}dxdt&\leq {C}_{B}((\|v\|_{2\theta}+\|p\|_{2\theta})\|v_{t}\|_{2}\\
    & \leq\zeta \|v_{t}\|_{2}^{2}+\frac{{C}_{B}}{4\zeta}(\|v\|_{2\theta}^{2}+\|p\|_{2\theta}^{2}).
 \end{aligned}
\end{equation*}
Analogously,
\begin{equation*}
 -\int_{0}^{L}{F_{2}(v,p)p_{t}}dxdt\leq\zeta \|p_{t}\|_{2}^{2}+\frac{{C}_{B}}{4\zeta}(\|v\|_{2\theta}^{2}+\|p\|_{2\theta}^{2}).
\end{equation*}
Consequently,
\begin{equation}\label{3.10}
  -\int_{0}^{L}{(F_{1}(v,p)v_{t}+F_{2}(v,p)p_{t})}dxdt\leq\zeta E(t)+\frac{{C}_{B}}{4\zeta}(\|v\|_{2\theta}^{2}+\|p\|_{2\theta}^{2}).
\end{equation}
Let $\zeta=\frac{1}{T}$, we have
\begin{equation*}
  E(T)\leq \frac{1}{T}\int_{0}^{T}{E(t)}dt+T{C}_{B}\int_{0}^{T}{(\|v\|_{2\theta}^{2}+
  \|p\|_{2\theta}^{2})}dt+E(s).
\end{equation*}
Then, integrating in $[0,T]$, there exists a constant ${C}_{T}>0$ such that
\begin{equation}\label{3.11}
  TE(T)\leq2\int_{0}^{T}{E(t)}dt+{C}_{T}\sup_{s\in[0,T]}(\|v(s)\|^{2}_{2\theta}+\|p(s)\|^{2}_{2\theta}).
\end{equation}

\textbf{Step 5.} Let \eqref{3.9} with $s=0$ and \eqref{3.10} with $\zeta=1$, we have
\begin{equation*}
  \int_{0}^{T}\int_{0}^{L}{(G_{1}(v_{t})v_{t}+G_{2}(p_{t})p_{t})}dxdt\leq E(0) -E
(T)+\int_{0}^{T}{E(t)}dt+T{C}_{B}\sup_{s\in[0,T]}(\|v(s)\|^{2}_{2\theta}+\|p(s)\|^{2}_{2\theta}).
\end{equation*}
 Combining the above estimates with \eqref{3.8}, we can deduce
 \begin{equation*}
  \int_{0}^{T}{E(t)}dt\leq {C}_{T}\sup_{s\in[0,T]}(\|v(s)\|^{2}_{2\theta}+\|p(s)\|^{2}_{2\theta})+{C}_{B}(E(0)+E(T)).
 \end{equation*}
 Then, from \eqref{3.11}
 \begin{equation*}
 TE(T)\leq {C}_{T}\sup_{s\in[0,T]}(\|v(s)\|^{2}_{2\theta}+\|p(s)\|^{2}_{2\theta})+{C}_{B}(E(0)+E(T)).
 \end{equation*}
 Let $T>2{C}_{B}$, for $m_{T}=\frac{{C}_{B}}{T-{C}_{B}}<1$, we can write
 \begin{equation*}
   E(T)\leq m_{T}E(0)+{C}_{T}\sup_{s\in[0,T]}(\|v(s)\|^{2}_{2\theta}+\|p(s)\|^{2}_{2\theta}).
 \end{equation*}
 Let $M_{n}=\sup_{s\in[nT,(n+1)T]}(\|v(s)\|^{2}_{2\theta}+\|p(s)\|^{2}_{2\theta}),
 n\in\mathbb{N}$. Then repeat the above argument progress on any $[nT,(n+1)T]$, we have
\begin{equation*}
  \begin{aligned}
    E(nT)&=m_{T}^{n}E(0)+{C}_{T}\sum^{n}_{k=1}{m_{T}^{n+1-l}M_{j-1}}\\
    & \leq m_{T}^{n}E(0)+\frac{{C}_{T}}{1-m_{T}}\sup_{s\in[0,mT]}(\|v(s)\|^{2}_{2\theta}+\|p(s)\|^{2}_{2\theta}).
  \end{aligned}
\end{equation*}
For any $t\geq0$, and $n\in\mathbb{N}, k\in[0,T)$ so it has $t=nT+k$, then
\begin{equation*}
  E(t)\leq E(nT)\leq m_{T}^{-1}m_{T}^{\frac{t}{T}}E(0)+\sup_{s\in[0,t]}(\|v(s)\|^{2}_{2\theta}+\|p(s)\|^{2}_{2\theta}).
\end{equation*}
Consequently, for $\sigma=-\frac{ln(m_{T})}{T}, \varsigma=m_{T}^{-1}, C_{B}=\frac{{C}_{T}}{1-m_{T}}$,
\begin{equation*}
  E(t)\leq \varsigma E(0)e^{-\sigma t}+C_{B}\sup_{s\in[0,t]}(\|v(s)\|^{2}_{2\theta}+\|p(s)\|^{2}_{2\theta}),\quad \forall t\geq0.
\end{equation*}
The proof is complete.
\end{proof}

\begin{lemma}
Let $B\subset\mathcal{H}$ be a bounded forward invariant set, then dynamical system $(\mathcal{H},S(t))$ is quasi-stable.
\end{lemma}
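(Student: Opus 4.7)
The plan is to verify both inequalities in Definition~3.2 by combining the two continuous-dependence estimates already proved: the local Lipschitz-type bound \eqref{2.28} from Theorem~2.6, and the refined exponential-plus-lower-order estimate from Lemma~3.3. Since $E(t)=\tfrac{1}{2}\|z(t)\|_{\mathcal{H}}^{2}$, any bound on $E(t)$ translates directly into a bound on $\|S(t)\mathfrak{y}_{1}-S(t)\mathfrak{y}_{2}\|_{\mathcal{H}}^{2}$ via the factor $2$.

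First, I would set $X=V=(H^{1}_{\ast}(0,L))^{2}$ and define the seminorm
\begin{equation*}
[\chi_{X}(v,p)]^{2}=\|v\|_{2\theta}^{2}+\|p\|_{2\theta}^{2},\qquad \theta\geq 2,
\end{equation*}
on $X$. Since $0<L<\infty$ and $H^{1}_{\ast}(0,L)\hookrightarrow\hookrightarrow L^{2\theta}(0,L)$ compactly by the Rellich--Kondrachov theorem, $\chi_{X}$ is a compact seminorm on $X$, which is exactly what Definition~3.2 requires.

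Second, I would verify the first inequality of Definition~3.2 directly from \eqref{2.28}: for any $\mathfrak{y}_{1},\mathfrak{y}_{2}\in B$ and any $t\geq 0$,
\begin{equation*}
\|S(t)\mathfrak{y}_{1}-S(t)\mathfrak{y}_{2}\|_{\mathcal{H}}^{2}\leq e^{Ct}\|\mathfrak{y}_{1}-\mathfrak{y}_{2}\|_{\mathcal{H}}^{2},
\end{equation*}
so I can take $\mathfrak{a}(t)=e^{Ct}$, which is clearly locally bounded on $[0,\infty)$. For the second inequality, since $B$ is bounded and forward invariant, Lemma~3.3 applies with constants $\sigma,\varsigma,C_{B}>0$ independent of $\varepsilon$. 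Using $\|S(t)\mathfrak{y}_{1}-S(t)\mathfrak{y}_{2}\|_{\mathcal{H}}^{2}=2E(t)$ and $\|\mathfrak{y}_{1}-\mathfrak{y}_{2}\|_{\mathcal{H}}^{2}=2E(0)$, I obtain
\begin{equation*}
\|S(t)\mathfrak{y}_{1}-S(t)\mathfrak{y}_{2}\|_{\mathcal{H}}^{2}\leq \varsigma e^{-\sigma t}\|\mathfrak{y}_{1}-\mathfrak{y}_{2}\|_{\mathcal{H}}^{2}+2C_{B}\sup_{0\leq s\leq t}\bigl[\chi_{X}(\mathfrak{u}^{1}(s)-\mathfrak{u}^{2}(s))\bigr]^{2},
\end{equation*}
where $\mathfrak{u}^{i}(s)=(v^{i}(s),p^{i}(s))$. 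Thus I set $\mathfrak{b}(t)=\varsigma e^{-\sigma t}$ and $\mathfrak{c}(t)=2C_{B}$.

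Finally, I would check the regularity requirements on the three scalar functions: $\mathfrak{a}(t)=e^{Ct}$ and $\mathfrak{c}(t)\equiv 2C_{B}$ are locally bounded on $[0,\infty)$, while $\mathfrak{b}(t)=\varsigma e^{-\sigma t}$ lies in $L^{1}(\mathbb{R}^{+})$ with $\lim_{t\to\infty}\mathfrak{b}(t)=0$. All three conditions of Definition~3.2 are then satisfied, proving that $(\mathcal{H},S(t))$ is quasi-stable on $B$. There is essentially no obstacle here since Lemma~3.3 is the substantive stability estimate; the only point to be careful about is the correct choice of the compact seminorm and the reinterpretation of the lower-order term $\|v(s)\|_{2\theta}^{2}+\|p(s)\|_{2\theta}^{2}$ as $[\chi_{X}(\mathfrak{u}^{1}(s)-\mathfrak{u}^{2}(s))]^{2}$.
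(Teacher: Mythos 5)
Your proposal is correct and follows essentially the same route as the paper's proof: the first quasi-stability inequality from the Lipschitz estimate \eqref{2.28}, the second from the stabilizability estimate of Lemma~3.3, with the compact seminorm $\chi_{X}(v,p)=(\|v\|_{2\theta}^{2}+\|p\|_{2\theta}^{2})^{1/2}$ on $X=(H^{1}_{*}(0,L))^{2}$ via the compact embedding into $L^{2\theta}$. Your explicit tracking of the factor $2$ arising from $E(t)=\tfrac{1}{2}\|z(t)\|_{\mathcal{H}}^{2}$ is slightly more careful than the paper, which absorbs it into the generic constant $C_{B}$.
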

\begin{proof}
Defining $S(t)z^{i}=(v^{i}(t),p^{i}(t),v_{t}^{i}(t),p_{t}^{i}(t))^{T}$ for $z^{i}\in B,i=1,2$, and
$v=v^{1}-v^{2},p=p^{1}-p^{2}$. Then, it follows from \eqref{2.28} that
\begin{equation*}
  \|S(t)z^{1}-S(t)z^{2}\|_\mathcal{H}^{2}\leq a(t)\|z^{1}-z^{2}\|_\mathcal{H}^{2},
\end{equation*}
where $\mathfrak{a}(t)=e^{CT}$.\\
Now let $X=H_{*}^{1}(\Omega)\times H_{*}^{1}(\Omega)$ and the seminorm be
$$\chi_{X}(v,p)=(\|v\|_{2\theta}^{2}+\|p\|_{2\theta}^{2})^{\frac{1}{2}}.$$
Since $H_{*}^{1}(\Omega)\hookrightarrow\hookrightarrow L^{2\theta}(\Omega)$, we can obtain that  $\chi_{X}$ is compact on $X$.

According to Lemma 3.3, we have
$$\|S(t)z^{1}-S(t)z^{2}\|_\mathcal{H}^{2}\leq \mathfrak{b}(t)\|z^{1}-z^{2}\|_\mathcal{H}^{2}+\mathfrak{c}(t)
\sup_{s\in[0,t]}[\chi_{X}(v(s),p(s))]^{2},$$
with $\mathfrak{b}(t)=\varsigma e^{-\sigma t}$, $\mathfrak{c}(t)=C_{B}$. \\
It is easy to verify that  $\mathfrak{b}(t)\in L^{1}(\mathbb{R}^{+})$ and   $\lim_{t\rightarrow\infty}{\mathfrak{b}(t)}=0$.\\
Then, since $B$ is a bounded subset of $\mathcal{H}$, we have $\mathfrak{c}(t)$  is locally bounded on $[0,\infty)$. By the Definition 3.2, we have the dynamical system is quasistable on $B\subset\mathcal{H}$.
\end{proof}
Since dynamical system $(\mathcal{H},S(t))$ is quasi-stable, we can give our main results as following.
\begin{theorem} Under the assumptions of Theorem 2.1, we obtain

(1) The dynamical system $(\mathcal{H},S(t))$ has a global attractor $\mathcal{A} \subset \mathcal{H}$ which is compact and connected. Moreover, the attractor can be characterized by the unstable manifold $$\mathcal{A}=\mathbb{M}^{+}(\mathcal{N}),$$
emanating from the set of stationary solutions established in Lemma 3.2.

(2) The attractor $\mathcal{A}$ has finite fractal dimension $dim_{\mathcal{H}}\mathcal{A}$.

(3) Every trajectory stabilizes to the set $\mathcal{N}$, that is,
\[\lim_{t\rightarrow+\infty}dist_{\mathcal{H}}(S_{t}z,\mathcal{N})=0 \quad \text {for any} \ z\in \mathcal{H}.\]
In particular, there exists a global minimal attractor $\mathcal{A}_{min}$ to the dynamical system, which
is precisely characterized  by the set of the stationary points $\mathcal{N}$, that is $\mathcal{A}_{min}=\mathcal{N}$.

(4) The attractor is bounded in $\mathcal{H}_{1}=(H^{2}(0,L)\cap H_{*}^{1}(0,L))^{2}\times (H_{*}^{1}(0,L))^{2}$, and every trajectory $z=(v,p,v_{t},p_{t})^{T}$ in $\mathcal{A}$ has the property
\begin{equation}\label{3.12}
 \|(v,p)\|_{(H^{2}(0,L)\cap H_{*}^{1}(0,L))^{2}}^{2}+\|(v_{t},p_{t})\|_{(H_{*}^{1}(0,L))^{2}}^{2}+
\|(v_{tt},p_{tt})\|_{(L^{2}(0,L))^{2}}^{2}\leq R^{2},
\end{equation}
for some constant $R>0$ independent of $\varepsilon\in[0,1]$.
\end{theorem}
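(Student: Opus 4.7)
The plan is to invoke the abstract theory of quasi-stable dynamical systems (Chueshov--Lasiecka; see \cite{Ch2010}), since Lemma 3.1 (gradient structure), Lemma 3.2 (boundedness of the equilibrium set $\mathcal{N}$) and Lemma 3.4 (quasi-stability on bounded forward invariant sets) are precisely the hypotheses required to get a global attractor with the properties (1)--(3), while (4) will be obtained by a bootstrap using the equations.

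For part (1), I would first use the gradient structure together with the boundedness of $\mathcal{N}$ to conclude that $(\mathcal{H},S(t))$ is dissipative: the Lyapunov function $\Phi=\mathcal{E}$ is coercive by Lemma 3.1 and constant precisely on $\mathcal{N}$, which is bounded, so every trajectory is bounded and there is a bounded absorbing set $\mathcal{B}$. On the closure of this absorbing set, quasi-stability (Lemma 3.4) yields asymptotic smoothness via the standard argument that $\mathfrak{b}(t)\to 0$ and the compactness of $\chi_X$. Existence of a compact global attractor $\mathcal{A}$ then follows, and connectedness follows from connectedness of $\mathcal{H}$ and continuity of $S(t)$. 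The unstable-manifold characterization $\mathcal{A}=\mathbb{M}^{+}(\mathcal{N})$ is the classical identity for gradient systems with compact attractor.

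For (2), quasi-stability on an absorbing set immediately implies finite fractal dimension of $\mathcal{A}$ by the general Chueshov--Lasiecka theorem, since the compact seminorm $\chi_X$ controls the non-contractive part. For (3), every bounded trajectory of a gradient system converges to $\mathcal{N}$, giving $\operatorname{dist}_{\mathcal{H}}(S(t)z,\mathcal{N})\to0$; since $\mathcal{N}$ is invariant and closed (closedness follows from continuity of the equilibrium equation in $\mathcal{H}$), it coincides with the global minimal attractor $\mathcal{A}_{\min}$.

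The main obstacle is part (4), the higher regularity bound uniform in $\varepsilon\in[0,1]$. My plan is to differentiate system \eqref{1.1} formally in $t$, set $w=v_t$, $r=p_t$, and treat $(w,r)$ as solving a linear hyperbolic system with damping $g_i'(v_t)w_t$, $g_i'(p_t)r_t$ (which is bounded below by $m$ thanks to \eqref{2.7}) plus a source controlled by the derivatives of $f_1,f_2$, which are locally Lipschitz on the attractor by (4) applied iteratively together with the assumption \eqref{2.4} and the fact that $\mathcal{A}$ is already bounded in $\mathcal{H}$. Running the same five-step energy/multiplier argument as in the proof of Lemma 3.3 on full trajectories in $\mathcal{A}$ (which are defined for all $t\in\mathbb{R}$ and bounded), I expect to obtain a quasi-stability estimate for the time-differentiated system, hence a uniform bound
\[
\sup_{t\in\mathbb{R}}\bigl(\|v_{tt}\|_2^2+\|p_{tt}\|_2^2+\|v_{tx}\|_2^2+\|p_{tx}\|_2^2\bigr)\leq R_1^2,
\]
with $R_1$ independent of $\varepsilon$ since \eqref{3.2} and the constants in Lemma 3.3 do not depend on $\varepsilon$. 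Reading \eqref{1.1} as an elliptic system
\[
\alpha v_{xx}-\gamma\beta p_{xx}=\rho v_{tt}+f_1(v,p)+g_1(v_t)-\varepsilon h_1,\qquad \beta p_{xx}-\gamma\beta v_{xx}=\mu p_{tt}+f_2(v,p)+g_2(p_t)-\varepsilon h_2,
\]
whose matrix $\bigl(\begin{smallmatrix}\alpha & -\gamma\beta\\ -\gamma\beta & \beta\end{smallmatrix}\bigr)$ is positive definite by $\alpha=\alpha_1+\gamma^2\beta$, inverting this $2\times2$ system in $L^2$ gives $v_{xx},p_{xx}\in L^2(0,L)$ with norm controlled by $R_1$ and the known $\mathcal{H}$-bound (using \eqref{2.4} and \eqref{2.7} to estimate $f_i$ and $g_i$ from the already-known bound on $v,p,v_t,p_t$). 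This produces the estimate \eqref{3.12} with $R$ independent of $\varepsilon$ and completes the proof.
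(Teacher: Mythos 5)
Your proposal follows the same overall architecture as the paper: parts (1)--(3) are obtained exactly as in the text, by combining the gradient structure (Lemma 3.1), the boundedness of $\mathcal{N}$ (Lemma 3.2) and quasi-stability (Lemma 3.4) with the abstract Chueshov--Lasiecka theorems on asymptotic smoothness, finite fractal dimension, and convergence to equilibria for gradient systems. The only genuine difference is in part (4). The paper does not redo any energy estimates there: it directly invokes the standard regularity consequence of quasi-stability on the attractor, namely that every full trajectory in $\mathcal{A}$ satisfies $v_t,p_t\in L^{\infty}(\mathbb{R};H^1_*(0,L))$ and $v_{tt},p_{tt}\in L^{\infty}(\mathbb{R};L^2(0,L))$ with a bound determined by the quasi-stability constants (hence independent of $\varepsilon$), and then recovers $v_{xx},p_{xx}\in L^{\infty}(\mathbb{R};L^2(0,L))$ from the equations exactly as you do, via the linear combination $\alpha_1 v_{xx}=\rho v_{tt}+\gamma\mu p_{tt}+\gamma f_2+\gamma g_2(p_t)-\gamma\varepsilon h_2+f_1+g_1(v_t)-\varepsilon h_1$ (your positive-definiteness observation for the matrix $\bigl(\begin{smallmatrix}\alpha & -\gamma\beta\\ -\gamma\beta & \beta\end{smallmatrix}\bigr)$ is the same fact, since its determinant equals $\alpha_1\beta>0$). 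Your plan to obtain the time-regularity by formally differentiating \eqref{1.1} in $t$ is the hands-on version of that abstract theorem, but as literally stated it has a technical wrinkle: differentiating $g_i(v_t)$ produces the coefficient $g_i'(v_t)$, which by \eqref{2.7} is only bounded above by $M_1(1+|v_t|^{q-1})$, and on the attractor $v_t$ is a priori only $L^2$-bounded, so the damping coefficient in the differentiated system need not be uniformly bounded. The standard (and the paper's implicit) way around this is to apply the quasi-stability estimate of Lemma 3.3 to the difference of time-translates $S(t+h)z-S(t)z$ and pass to the limit $h\to 0$, which uses only the monotonicity bound \eqref{2.9} and never differentiates $g_i$. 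If you phrase your Step--(4) argument in that difference-quotient form, your proof coincides with the paper's; otherwise this point needs to be repaired.
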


\begin{proof} (1) It follows from Lemma 3.4 that $(\mathcal{H},S(t))$ is quasi-stable.  Hence, we have that $(\mathcal{H},S(t))$ is asymptotically smooth by Proposition 7.9.4 of \cite{Ch2010}. Then, applying Lemma 3.1, Lemma 3.2 and Corollary 7.5.7 of \cite{Ch2010},
we can conclude $(\mathcal{H},S(t))$ possesses  a compact global  attractor $\mathcal{A}$. In addition, it can be characterized  by $\mathcal{A}=\mathbb{M}^{+}(\mathcal{N}).$

(2) Since  the system $(\mathcal{H},S(t))$ is quasi-stable, applying Theorem 7.9.6 of \cite{Ch2010}, we conclude that  attractor $\mathcal{A}$ has finite fractal dimension $dim_{\mathcal{H}}\mathcal{A}$.

(3) Combining Theorem 3.1-(1)  and Theorem 7.5.10 of \cite{Ch2010}, we can get the desired result immediately.

(4) Since $(\mathcal{H},S(t))$ is quasi-stable on $\mathcal{A}$, the arbitrary trajectory $z=(v,p,v_{t},p_{t})^{T}$ in $\mathcal{A}$ has the following regularity properties
\begin{align*}
  &v_{t},p_{t}\in L^{\infty}(\mathbb{R};H_{*}^{1}(0,L))\cap C(\mathbb{R};L^{2}(0,L)),\\
  & v_{tt},p_{tt}\in L^{\infty}(\mathbb{R};L^{2}(0,L)).
\end{align*}
It follows from \eqref{1.1} that
\begin{equation}\label{3.13}
  \begin{cases}
\alpha v_{xx}=\rho v_{tt}+\gamma\beta p_{xx}+f_{1}(v,p)+g_{1}(v_{t})-\varepsilon h_{1},\\[2mm]
\beta p_{xx}=\mu p_{tt}+\gamma\beta v_{xx}+f_{2}(v,p)+g_{2}(p_{t})-\varepsilon h_{2}.\\[2mm]
\end{cases}
\end{equation}
Then, we can deduce
\begin{equation*}
  \alpha_{1}v_{xx}=\rho v_{tt}+\gamma\mu p_{tt}+\gamma f_{2}(v,p)+\gamma g_{2}(p_{t})-\gamma\varepsilon h_{2}+f_{1}(v,p)+g_{1}(v_{t})-\varepsilon h_{1}.
\end{equation*}
Using the fact  $\alpha_{1}\neq0$, $v_{t},p_{t}\in L^{\infty}(\mathbb{R};H_{*}^{1}(0,L))\hookrightarrow L^{\infty}(\mathbb{R};L^{2}(0,L))$ and $f_{i}(v,p)$ is locally Lipschitz continuous, we have
\begin{equation*}
  v_{xx}\in L^{\infty}(\mathbb{R};L^{2}(0,L)),
\end{equation*}
\begin{equation*}
  v\in L^{\infty}(\mathbb{R};H^{2}(0,L)\cap H_{*}^{1}(0,L)).
\end{equation*}
It follows from \eqref{3.13} that
\begin{equation*}
  \beta p_{xx}\in L^{\infty}(\mathbb{R};L^{2}(0,L)),
\end{equation*}
\begin{equation*}
  p\in L^{\infty}(\mathbb{R};H^{2}(0,L)\cap H_{*}^{1}(0,L)).
\end{equation*}
The proof is complete.
\end{proof}

\begin{theorem}The system $(\mathcal{H},S(t))$ has a  generalized fractal
exponential attractor. More precisely, for any given $\xi\in(0, 1]$, there
exists a generalized exponential attractor  $\mathcal{A}_{exp, \xi}$ in the extended
space $\mathcal{H}_{-\xi}$ which is defined as the interpolation of
\begin{equation*}
   \mathcal{H}_{0}:=\mathcal{H},\quad \mathcal{H}_{-1}:=(L^{2}(0,L))^{2}\times ( H_{*}^{-1}(0,L))^{2}.
\end{equation*}
\end{theorem}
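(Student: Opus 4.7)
The plan is to invoke the abstract result on generalized fractal exponential attractors for quasi-stable dynamical systems, e.g. Theorem 7.9.9 of \cite{Ch2010}. The hypotheses to verify are: (i) existence of a bounded, positively invariant absorbing set $B\subset\mathcal{H}$; (ii) quasi-stability of $(\mathcal{H},S(t))$ on $B$; (iii) H\"{o}lder continuity in time of the orbits $t\mapsto S(t)z_{0}$ with values in the extended space $\mathcal{H}_{-\xi}$, uniformly for $z_{0}\in B$.

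\textbf{Steps (i)--(ii).} A bounded absorbing set in $\mathcal{H}$ is already at hand: combining the lower bound $\beta_{2}\|z\|_{\mathcal{H}}^{2}-C_{F}\leq\mathcal{E}$ from \eqref{2.19} with the non-increasing property \eqref{2.18} of $\mathcal{E}(t)$ yields a ball in $\mathcal{H}$ absorbing every bounded subset, and taking its image under some $S(t_{0})$ produces a forward-invariant $B$. Alternatively, a thickened neighborhood of the compact global attractor from Theorem 3.1(1) serves the same purpose. Hypothesis (ii) is exactly Lemma 3.4.

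\textbf{Step (iii).} By interpolation between $\mathcal{H}_{0}=\mathcal{H}$ and $\mathcal{H}_{-1}$,
\begin{equation*}
\|S(t_{1})z_{0}-S(t_{2})z_{0}\|_{\mathcal{H}_{-\xi}}\leq \|S(t_{1})z_{0}-S(t_{2})z_{0}\|_{\mathcal{H}}^{1-\xi}\,\|S(t_{1})z_{0}-S(t_{2})z_{0}\|_{\mathcal{H}_{-1}}^{\xi},
\end{equation*}
and since the first factor is uniformly bounded on $B$, the task reduces to Lipschitz continuity in $\mathcal{H}_{-1}$. The position components are handled directly via $v(t_{1})-v(t_{2})=\int_{t_{2}}^{t_{1}}v_{t}(s)\,ds$ and the uniform $L^{2}$-bound on $v_{t}$ over $B$ (and similarly for $p$). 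For the velocity components, I would solve for $v_{tt},p_{tt}$ from \eqref{1.1} and bound the right-hand sides in $H^{-1}_{*}(0,L)$: the linear terms $\alpha v_{xx}$ and $\gamma\beta p_{xx}$ are in $H^{-1}_{*}$ with norms controlled by $\|v_{x}\|_{2}+\|p_{x}\|_{2}$; the forcing $\varepsilon h_{i}\in L^{2}\hookrightarrow H^{-1}_{*}$; the nonlinearity $f_{i}(v,p)$ lies in $L^{2}$ by \eqref{2.4} together with the 1D embedding $H^{1}_{*}\hookrightarrow L^{\infty}$; finally, by duality against $H^{1}_{*}\hookrightarrow L^{\infty}$, one has $\|g_{i}(v_{t})\|_{H^{-1}_{*}}\leq C\|g_{i}(v_{t})\|_{L^{1}}$, which is controlled through \eqref{2.7} and the uniform $L^{2}$-bound on $v_{t}$, $p_{t}$ over $B$. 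Integration in time then delivers the required Lipschitz bound in $\mathcal{H}_{-1}$.

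With (i)--(iii) in hand, the abstract theorem produces a compact set $\mathcal{A}_{exp,\xi}\subset\mathcal{H}_{-\xi}$ of finite fractal dimension in $\mathcal{H}_{-\xi}$, positively invariant under $S(t)$, which attracts every bounded subset of $\mathcal{H}$ at uniform exponential rate in the $\mathcal{H}_{-\xi}$-metric. \textbf{The main obstacle} I anticipate is the $H^{-1}_{*}$-bound on the nonlinear damping $g_{i}(v_{t})$ when the growth exponent $q$ in \eqref{2.7} exceeds $1$: here it is essential that the problem is one-dimensional, so that $H^{1}_{*}\hookrightarrow L^{\infty}$, allowing us to trade the high integrability a priori needed on $g_{i}(v_{t})$ for the merely $L^{1}$-type control guaranteed by the energy estimate on $B$.
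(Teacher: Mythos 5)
Your proposal follows essentially the same route as the paper: a positively invariant bounded absorbing set built from the Lyapunov functional, quasi-stability from Lemma 3.4, H\"older continuity of orbits in the extended space, and then the abstract quasi-stability theorem (Theorem 7.9.9 of Chueshov--Lasiecka) plus interpolation between $\mathcal{H}_0$ and $\mathcal{H}_{-1}$. The only structural difference is cosmetic: the paper establishes the attractor in $\mathcal{H}_{-1}$ and then interpolates the result to $\mathcal{H}_{-\xi}$, whereas you interpolate the H\"older estimate first; both are standard and equivalent here.

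One quantitative point in your Step (iii) deserves a caveat. You claim \emph{Lipschitz} continuity of $t\mapsto S(t)z_0$ in $\mathcal{H}_{-1}$, which requires a bound on $\|g_i(v_t(t))\|_{L^1(0,L)}$ that is uniform \emph{pointwise in time}. Your proposed control of $\|g_i(v_t)\|_{L^1}$ ``through \eqref{2.7} and the uniform $L^2$-bound on $v_t$'' does not suffice when $q>2$, since \eqref{2.7} only gives $|g_i(s)|\leq C(|s|+|s|^{q})$ and $\|v_t\|_{q}^{q}$ is not controlled by $\|v_t\|_{2}$. The natural fix (and what implicitly underlies the paper's estimate $\int_0^T\|z_t(s)\|_{\mathcal{H}_{-1}}^2\,ds\leq C_{\mathcal{B}T}^2$) is to split into $\{|v_t|<1\}$, where $|g_i(v_t)|\leq 2M_1|v_t|$, and $\{|v_t|\geq 1\}$, where $|g_i(v_t)|\leq g_i(v_t)v_t$, and then use the dissipation identity \eqref{2.18}, which bounds $\int_{t_1}^{t_2}\int_0^L g_i(v_t)v_t\,dx\,dt$ on the absorbing set. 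This yields an integrated-in-time bound rather than a pointwise one, hence only $\tfrac12$-H\"older (not Lipschitz) continuity in $\mathcal{H}_{-1}$ --- which is exactly what the paper asserts and is entirely sufficient for the abstract theorem. With that adjustment your argument is sound and matches the paper's.
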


\begin{proof}
Let  us take $\mathcal{B}=\{z\in \mathcal{H}|\Phi(z)\leq R\}$ where $\Phi$ is the strict Lyapunov functional given in Lemma 3.1. Then we can derive that for  sufficiently large $R$ that $\mathcal{B}$ is a positively invariant  bounded absorbing set, which shows that the system is quasi-stable on the set $\mathcal{B}$.

Then for solution $z(t)=S(t)z_0$ with initial data  $z(0)\in \mathcal{B}$,  we can derive that, for  any $T>0$,
$$
\int_{0}^{T}\left\|z_{t}(s)\right\|_{\mathcal{H}_{-1}}^{2} \mathrm{~d} s \leq C_{\mathcal{B} T}^{2},
$$
which shows that
$$
\left\|S\left(t_{1}\right) z-S\left(t_{2}\right) z\right\|_{\mathcal{H}_{-1}} \leq \int_{t_{1}}^{t_{2}}\left\|z_{t}(s)\right\|_{\mathcal{H}_{-1}} d s \leq C_{\mathcal{B} T}\left|t_{1}-t_{2}\right|^{\frac{1}{2}}
$$
where $\mathcal{C}_{\mathcal{B}T}$ is a positive constant and  $ t_{1},t_{2}\in[0,T]$.
Hence we obtain  that for any initial data $z_0\in \mathcal{H}$ the map $t\mapsto S(t)z_0$
is $\frac{1}{2}$-H\"{o}lder continuous in the extended phase space $\mathcal{H}_{-1}$.  Therefore, it follows from
Theorem 7.9.9 in \cite{Ch2002} that the dynamical system $(\mathcal{H}, S(t))$ possesses a generalized fractal
exponential attractor with finite fractal dimension  in the extended space $\mathcal{H}_{-1}$.

 Furthermore, by the standard interpolation theorem,  we can obtain the
existence of exponential attractors in the extended
space $\mathcal{H}_{-\xi}$  with $\xi\in (0, 1)$. The proof is complete.
\end{proof}

\section{Upper semicontinuity of global attractor }

In this section, we denote the attractor obtained in Theorem 3.1 as the $\mathcal{A}_{\varepsilon}$. Then, we investigate the upper semicontinuity of the attractors $\mathcal{A}_\varepsilon$ as $\varepsilon\rightarrow\varepsilon_0$.
\begin{definition} \cite{Ma2020}
Let $\Lambda$ be a complete metric space and $S_{\sigma}(t)$ a family of semigroups on $\mathcal{H}$, where $\sigma\in\Lambda$. The global attractors $\mathcal{A}_{\sigma}$ is called upper semicontinuous on $\sigma_{0}\in\Lambda$ if
\[\lim_{\sigma\rightarrow\sigma_{0}}dist_{\mathcal{H}}(\mathcal{A}_{\sigma},\mathcal{A}_{\sigma_{0}})=0,\]
where $dist_{\mathcal{H}}\{\mathbb{A},\mathbb{B}\}=\sup_{a\in \mathbb{A}}\inf_{b\in \mathbb{B}}d(\mathbb{A},\mathbb{B})$ expresses the Hausdorff semi-distance in $X$. Similarly,  $\mathcal{A}_{\sigma}$ is lower semicontinuous on $\sigma_{0}\in\Lambda$ if
\[\lim_{\sigma\rightarrow\sigma_{0}}dist_{\mathcal{H}}(\mathcal{A}_{\sigma_{0}},\mathcal{A}_{\sigma})=0.\]
Then $\mathcal{A}_{\sigma}$ is continuous on $\sigma_{0}\in\Lambda$ if
\[\lim_{\sigma\rightarrow\sigma_{0}}d_{\mathcal{H}}(\mathcal{A}_{\sigma},\mathcal{A}_{\sigma_{0}})=0,\]
where $d_{\mathcal{H}}(\mathbb{A},\mathbb{B})=\max\{dist_{\mathcal{H}}(\mathbb{A},\mathbb{B}),dist_{\mathcal{H}}(\mathbb{B},\mathbb{A})\}$ expresses the Hausdorff metric in $X$.
\end{definition}
\begin{proposition} \cite{Ho2015} Assume that\\
(H1) $(\mathcal{H},S_{\sigma}(t))$ has a global attractor $\mathcal{A}_{\sigma}$ for any $\sigma\in\Lambda$,\\
(H2) There exists a bounded set $B\subset\mathcal{H}$ such that $\mathcal{A}_{\sigma}\in B$ for every $\sigma\in\Lambda$,\\
(H3) $S_{\sigma}(t)z$ is continuous in $\sigma$ for $t>0$ and uniformly for $z$ in bounded subsets of $\mathcal{H}$.\\
Then the global attractor is continuous on all $I$, where $I$ is a "residual" set dense in $\Lambda$.
\end{proposition}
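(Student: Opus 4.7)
My plan is to split the statement into two parts: (i) the upper semicontinuity of the set-valued map $\sigma\mapsto\mathcal{A}_\sigma$ at every $\sigma_0\in\Lambda$, and (ii) a Baire-category argument (of Fort--Kuratowski type) that promotes (i) to full Hausdorff-continuity on a dense $G_\delta$ subset $I\subset\Lambda$. Once (i) is in place, the residual set $I$ is built as a countable intersection of open dense sets, which exists because $\Lambda$ is a complete (hence Baire) metric space by the standing assumption in Definition 4.1.

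For (i), fix $\sigma_0\in\Lambda$ and $\varepsilon>0$. By (H1) and the attraction property of $\mathcal{A}_{\sigma_0}$ applied to the bounded set $B$ from (H2), there exists $T=T(\varepsilon)>0$ such that
\begin{equation*}
\operatorname{dist}_{\mathcal{H}}(S_{\sigma_0}(T)B,\mathcal{A}_{\sigma_0})<\tfrac{\varepsilon}{2}.
\end{equation*}
Using (H3), I choose a neighborhood $U$ of $\sigma_0$ so that $\|S_{\sigma}(T)z-S_{\sigma_0}(T)z\|_{\mathcal{H}}<\varepsilon/2$ for every $\sigma\in U$ and every $z\in B$. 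Since the attractor is strictly invariant and $\mathcal{A}_\sigma\subset B$, we have $\mathcal{A}_\sigma=S_\sigma(T)\mathcal{A}_\sigma\subset S_\sigma(T)B$; the triangle inequality for Hausdorff semi-distances then gives $\operatorname{dist}_{\mathcal{H}}(\mathcal{A}_\sigma,\mathcal{A}_{\sigma_0})<\varepsilon$ for all $\sigma\in U$, establishing upper semicontinuity at $\sigma_0$.

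For (ii), I define for each $n\in\mathbb{N}$
\begin{equation*}
I_n=\{\sigma\in\Lambda:\ \exists\,r>0\text{ with }d_{\mathcal{H}}(\mathcal{A}_{\sigma'},\mathcal{A}_{\sigma''})<1/n\ \text{for all }\sigma',\sigma''\in B_r(\sigma)\},
\end{equation*}
which is open by construction. Density of $I_n$ is the classical Fort step: for an arbitrary nonempty open $O\subset\Lambda$, I would use the upper semicontinuity from (i) together with compactness of each $\mathcal{A}_\sigma$ (inherited from (H1)) to iteratively produce a nested sequence of shrinking open balls inside $O$ along which the Hausdorff oscillation of $\sigma\mapsto\mathcal{A}_\sigma$ drops below $1/n$; a Baire-category completion argument then exhibits a point $\sigma^*\in O\cap I_n$. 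The required residual set is $I=\bigcap_{n\in\mathbb{N}}I_n$, and at every $\sigma\in I$ the map $\sigma\mapsto\mathcal{A}_\sigma$ is Hausdorff-continuous, as both semi-distances vanish in the limit.

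The main obstacle is the density step in (ii): upper semicontinuity alone does not prevent the lower-semicontinuity defect $\sigma'\mapsto\operatorname{dist}_{\mathcal{H}}(\mathcal{A}_{\sigma_0},\mathcal{A}_{\sigma'})$ from staying bounded away from zero on an open set. Overcoming this relies crucially on the compactness of the attractors (so that oscillation can be controlled by finite $\varepsilon$-nets) and on $\Lambda$ being Baire; once these are in place the Fort--Kuratowski mechanism delivers the $G_\delta$-dense set $I$ and the proposition follows.
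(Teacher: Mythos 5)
First, a remark on the comparison itself: the paper offers no proof of this proposition --- it is quoted from the reference \cite{Ho2015} --- so your attempt can only be measured against the argument of that source. Your two-step plan (prove upper semicontinuity of $\sigma\mapsto\mathcal{A}_\sigma$ at \emph{every} $\sigma_0\in\Lambda$ from (H1)--(H3), then upgrade to Hausdorff continuity on a residual set by a Fort-type Baire argument) is exactly the strategy of that reference. Your step (i) is complete and correct: attraction of the uniform bound $B$ by $\mathcal{A}_{\sigma_0}$ fixes a time $T$, (H3) makes $S_\sigma(T)z$ uniformly close to $S_{\sigma_0}(T)z$ on $B$ for $\sigma$ near $\sigma_0$, and invariance $\mathcal{A}_\sigma=S_\sigma(T)\mathcal{A}_\sigma\subset S_\sigma(T)B$ closes the estimate.

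The gap is in step (ii). You correctly single out the density of $I_n$ as ``the main obstacle,'' but what you offer there --- ``iteratively produce a nested sequence of shrinking open balls inside $O$ along which the Hausdorff oscillation drops below $1/n$'' --- is a description of the desired conclusion, not an argument: upper semicontinuity gives no mechanism forcing the lower semidistance $\operatorname{dist}_{\mathcal{H}}(\mathcal{A}_{\sigma'},\mathcal{A}_{\sigma''})$ in the \emph{other} direction to decay along shrinking balls, and compactness of the individual attractors does not by itself produce the point $\sigma^{*}\in O\cap I_n$. The standard way to close this (and the route taken in \cite{Ho2015}) is to scalarize: choose a countable dense set $\{x_j\}$ in $B$ and set $\phi_j(\sigma)=\operatorname{dist}(x_j,\mathcal{A}_\sigma)$. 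The upper semicontinuity from step (i) makes each $\phi_j$ lower semicontinuous, hence continuous on a residual subset of the Baire space $\Lambda$; on the countable intersection $I$ of these residual sets all $\phi_j$ are continuous simultaneously, and continuity of all $\phi_j$ at $\sigma_0$ together with compactness of $\mathcal{A}_{\sigma_0}$ (cover it by finitely many small balls centred at suitable $x_j$) forces $\limsup_{\sigma\to\sigma_0}\operatorname{dist}_{\mathcal{H}}(\mathcal{A}_{\sigma_0},\mathcal{A}_\sigma)=0$, i.e.\ lower semicontinuity, hence full Hausdorff continuity on $I$. Without this (or an actual proof of Fort's theorem), your step (ii) remains an announcement rather than a proof.
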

\begin{lemma}
There exists a set $I$ dense in $[0, 1]$ such that the global attractor $\mathcal{A}_{\varepsilon}$ obtained in  Theorem 3.1 is continuous at  $\varepsilon_{0}\in I$, that is
\begin{equation}\label{4.1}
    \lim_{\varepsilon\rightarrow\varepsilon_{0}}d_{\mathcal{H}}
   (\mathcal{A}_{\varepsilon},\mathcal{A}_{\varepsilon_{0}})=0.
\end{equation}
\end{lemma}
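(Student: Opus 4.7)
The plan is to apply Proposition 4.1 with $\Lambda = [0,1]$ and $\sigma = \varepsilon$, so three hypotheses (H1)--(H3) must be verified. Hypothesis (H1) is immediate from Theorem 3.1(1): for every $\varepsilon \in [0,1]$ the dynamical system $(\mathcal{H}, S_\varepsilon(t))$ possesses a global attractor $\mathcal{A}_\varepsilon$. Hypothesis (H2) follows from the uniform regularity bound \eqref{3.12}, provided I observe that the constant $R$ in Theorem 3.1(4) is independent of $\varepsilon\in[0,1]$: tracking the $\varepsilon$-dependence in Lemma 2.1 one sees that $C_F$ involves $\|h_1\|_2,\|h_2\|_2$ but no factor of $\varepsilon$ apart from $\varepsilon\in[0,1]$, and the constants in the quasi-stability estimate of Lemma 3.3 were already stated to be independent of $\varepsilon$. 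Consequently, a single bounded set $B\subset\mathcal{H}$ absorbs $\mathcal{A}_\varepsilon$ for every $\varepsilon\in[0,1]$.

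The substantive step is (H3): for each fixed $t>0$ the map $\varepsilon\mapsto S_\varepsilon(t)z$ is continuous, uniformly for $z$ in bounded subsets of $\mathcal{H}$. Fix $T>0$ and $R>0$ and take $z\in\mathcal{H}$ with $\|z\|_{\mathcal{H}}\le R$, together with $\varepsilon,\varepsilon_0\in[0,1]$. Writing $w=(v,p,v_t,p_t)=S_\varepsilon(\cdot)z-S_{\varepsilon_0}(\cdot)z$, then $w$ solves a system of the form \eqref{3.3} but with inhomogeneous right-hand side $((\varepsilon-\varepsilon_0)h_1,(\varepsilon-\varepsilon_0)h_2)$ and zero initial data. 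Testing with $(v_t,p_t)$, exploiting the monotonicity \eqref{2.9} to discard the damping terms (they enter with the good sign), and applying the local Lipschitz estimate \eqref{2.4} on the bounded set furnished by (H2), together with Cauchy--Schwarz on the forcing, I would obtain a differential inequality
\begin{equation*}
  \frac{d}{dt}\|w(t)\|_{\mathcal{H}}^{2} \le C_R \|w(t)\|_{\mathcal{H}}^{2} + |\varepsilon-\varepsilon_0|^{2}\bigl(\|h_1\|_2^{2}+\|h_2\|_2^{2}\bigr).
\end{equation*}
Gronwall's inequality then yields $\|S_\varepsilon(t)z-S_{\varepsilon_0}(t)z\|_{\mathcal{H}}^{2}\le C(R,T)|\varepsilon-\varepsilon_0|^{2}$ for all $t\in[0,T]$, uniformly in $z$ with $\|z\|_{\mathcal{H}}\le R$, which is precisely (H3).

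With (H1)--(H3) established, Proposition 4.1 produces a residual, hence dense, set $I\subset[0,1]$ on which $\varepsilon\mapsto\mathcal{A}_\varepsilon$ is continuous in the Hausdorff metric, proving \eqref{4.1}. The main obstacle is purely bookkeeping inside (H3): one must be careful that the Lipschitz constant used for $f_i$ and all other constants arising along the way depend only on $R$ and $T$ and not on $\varepsilon$, which is exactly what the uniform absorbing set from (H2) provides. The structural terms in \eqref{3.3} reappear unchanged (they carry no $\varepsilon$), so the only effect of the parameter is concentrated in the source, which is why the Gronwall bound scales cleanly as $|\varepsilon-\varepsilon_0|^{2}$.
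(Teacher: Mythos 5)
Your proposal is correct and follows essentially the same route as the paper: Proposition 4.1 with $\Lambda=[0,1]$, with (H1) from Theorem 3.1 and (H3) obtained by testing the difference system with $(v_t,p_t)$, discarding the damping via the monotonicity \eqref{2.9}, invoking the local Lipschitz bound \eqref{2.4}, and closing with Gronwall to get $\|S_{\varepsilon}(t)z-S_{\varepsilon_0}(t)z\|_{\mathcal{H}}\le C(R,T)|\varepsilon-\varepsilon_0|$. The only (harmless) deviation is in (H2), where you appeal to the $\varepsilon$-uniform regularity bound \eqref{3.12}, while the paper derives the uniform bounded set directly in $\mathcal{H}$ from the gradient structure, using $\sup_{z\in\mathcal{A}_{\varepsilon}}\Phi_{\varepsilon}(z)\le\sup_{z\in\mathcal{N}_{\varepsilon}}\Phi_{\varepsilon}(z)$ together with the $\varepsilon$-independent bound \eqref{3.2} on the stationary set.
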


\begin{proof} The argument is inspired by \cite{Babin1997,Hale1989}. We apply Proposition 4.1 with $\Lambda=[0,1]$. Then Theorem 3.1 implies that the  assumption (H1) holds.

 It follows from  \eqref{2.19} and the fact $\sup_{z\in\mathcal{A}_{\varepsilon}}{\Phi_{\varepsilon}(z)}\leq
 \sup_{z\in\mathcal{N}_{\varepsilon}}\Phi_{\varepsilon}(z)$ that
 \begin{equation*}
   \begin{aligned}
      \sup_{z\in\mathcal{A}_{\varepsilon}}{\|z\|_{\mathcal{H}}^{2}} & \leq\frac{\sup_{z\in\mathcal{A}_{\varepsilon}}{\Phi_{\varepsilon}(z)+C_{F}}}{\beta_{2}} \\
      & \leq\frac{C_{F}\sup_{z\in\mathcal{N}_{\varepsilon}}\|z\|_{\mathcal{H}}^{r+1}+2C_{F}}{\beta_{2}}.
    \end{aligned}
 \end{equation*}
Hence, we can derive from  \eqref{3.2} that there exists a positive constant $C$ independent of $\varepsilon$ such that
 \begin{equation*}
    \sup_{z\in\mathcal{A}_{\varepsilon}}{\|z\|_{\mathcal{H}}^{2}}\leq C,~~\quad \forall\varepsilon\in[0,1].
 \end{equation*}
 Then we have that
$\mathcal{B}=\{z\in\mathcal{H}:\|z\|_{\mathcal{H}}^{2}\leq C\}$
is a bounded set which is independent of $\varepsilon$ and $\mathcal{A}_{\varepsilon}\subset \mathcal{B}$ for any $\varepsilon\in [0,1]$.
Therefore, the assumption (H2) holds.

Let $B$ be a bounded set of $\mathcal{H}$. Then for any given $\varepsilon_{1},\varepsilon_{2}\in [0,1], z_{0}\in B$, we define
\[S_{\varepsilon_{i}}(t)z_{0}=(v^{i}(t),p^{i}(t),v_{t}^{i}(t),p_{t}^{i}(t))^{T}, \quad i=1,2,\]
and
\[v=v^{1}-v^{2}, p=p^{1}-p^{2}.\]
Then $(v,p,v_t,p_t)$ satisfies the following equations
\begin{equation}\label{4.2}
 \begin{cases}
 \ \rho v_{tt}-\alpha v_{xx}+\gamma\beta p_{xx}=(\varepsilon_{1}-\varepsilon_{2})h_{1}-F_{1}(v,p)-G_{1}(v_{t}),\\[2mm]
\mu p_{tt}-\beta p_{xx}+\gamma\beta v_{xx}=(\varepsilon_{1}-\varepsilon_{2})h_{2}-F_{2}(v,p)-G_{2}(p_{t}),\\[2mm]
\end{cases}
\end{equation}
where $F_i(v,p)$ ($i=1,2$) $G_1(v_t)$ and $G_2(p_t)$ are constructed by the same as in Lemma 3.3.
Multiplying the equations \eqref{4.2} by $v_{t},p_{t}$, respectively. and integrating over $[0,L]$ by parts, we have
\begin{equation}\label{4.3}
\begin{aligned}
  \frac{1}{2} \frac{d}{dt}\|z\|_{\mathcal{H}}^{2} =&-\int_{0}^{L}{(F_{1}(v,p)+F_{2}(v,p))}dx-
   \int_{0}^{L}{(G_{1}(v_{t})v_t+G_{2}(p_{t}))p_t}dx \\
  & +(\varepsilon_{1}-\varepsilon_{2})\int_{0}^{L}{(h_{1}v_{t}+h_{2}p_{t})}dx.
\end{aligned}
\end{equation}
Using \eqref{2.4}, H\"{o}lder's inequality and $H_{*}^{1}(0,L)\hookrightarrow L^{r}(0,L),r\in[0,\infty)$, we can derive that
\begin{equation}\label{4.4}
\begin{aligned}
    \int_{0}^{L}{F_{1}(v,p)v_{t}}dx& \leq C_{f}(1+\|S_{\varepsilon_{1}}(t)z_{0}\|_{\mathcal{H}}^{r-1}+
    \|S_{\varepsilon_{2}}(t)z_{0}\|_{\mathcal{H}}^{r-1})(\|v\|_{2r}+\|p\|_{2r})
    \|v_{t}\|_{2}\\
   & \leq C_{f}(1+\|S_{\varepsilon_{1}}(t)z_{0}\|_{\mathcal{H}}^{r-1}+
    \|S_{\varepsilon_{2}}(t)z_{0}\|_{\mathcal{H}}^{r-1})(\|\nabla v\|_{2}+\|\nabla p\|_{2})
    \|v_{t}\|_{2}.
\end{aligned}
\end{equation}
Since  $\mathcal{E}_{\varepsilon}(t)$ is a non-increasing function, then for any $z_{0}\in B$, we have
\begin{equation*}
  \|S_{\varepsilon_{i}}(t)z_{0}\|_{\mathcal{H}}^{r-1}\leq\frac{\mathcal{E}_{\varepsilon_{i}}(0)+C_{F}}{\beta_{2}}
  \leq C_{B}, \,\,i=1,2.
\end{equation*}
Combining the above estimate with \eqref{4.4}, we have
\begin{equation*}
   \int_{0}^{L}{F_{1}(v,p)v_{t}}dx\leq C_{B}(\|\nabla v\|_{2}+\|\nabla p\|_{2})\|v_{t}\|_{2}\leq
    C_{B}(\|\nabla v\|_{2}^{2}+\|\nabla p\|_{2}^{2})+\|v_{t}\|_{2}^{2}.
\end{equation*}
Similarly,
\begin{equation*}
  \int_{0}^{L}{F_{2}(v,p)p_{t}}dx\leq C_{B}(\|\nabla v\|_{2}^{2}+\|\nabla p\|_{2}^{2})+\|p_{t}\|_{2}^{2}.
\end{equation*}
Therefore,
\begin{equation}\label{4.5}
  \int_{0}^{L}{(F_{1}(v,p)v_{t}+F_{2}(v,p)p_{t})}dx\leq C_{B}\|z\|_{\mathcal{H}}^{2}.
\end{equation}
It follows from  the monotonicity property \eqref{2.9} that
\begin{equation}\label{4.6}
  - \int_{0}^{L}{(G_{1}(v_{t})v_{t}+G_{2}(p_{t})p_{t})}dx\leq0.
\end{equation}
Moreover, it is easy to verify that
\begin{equation}\label{4.7}
  \begin{aligned}
    (\varepsilon_{1}-\varepsilon_{2})\int_{0}^{L}{(h_{1}v_{t}+h_{2}p_{2})}dx &\leq\frac{1}{2} (\|v_{t}\|_{2}^{2}+\|p_{t}\|_{2}^{2})+\frac{1}{2}|\varepsilon_{1}-\varepsilon_{2}|^{2}
    (\|h_{1}\|_{2}^{2}+\|h_{2}\|_{2}^{2})\\
     & \leq \frac{1}{2} \|z\|_{\mathcal{H}}^{2}+\frac{1}{2}|\varepsilon_{1}-\varepsilon_{2}|^{2}
    (\|h_{1}\|_{2}^{2}+\|h_{2}\|_{2}^{2}).
  \end{aligned}
\end{equation}
Combining \eqref{4.5}, \eqref{4.6}, \eqref{4.7} with  \eqref{4.3}, we have
\begin{equation}\label{4.8}
  \frac{d}{dt}\|z\|_{\mathcal{H}}^{2}\leq C_{B}\|z\|_{\mathcal{H}}^{2}+|\varepsilon_{1}-\varepsilon_{2}|^{2}
    (\|h_{1}\|_{2}^{2}+\|h_{2}\|_{2}^{2}).
\end{equation}
Using the Gronwall's inequality  and the fact $\|z(0)\|_{\mathcal{H}}^{2}=0$, we can derive from \eqref{4.8} that
\begin{equation*}
  \|z(t)\|_{\mathcal{H}}^{2}\leq\frac{1}{C_{B}}(e^{C_{B}t}-1)(\|h_{1}\|_{2}^{2}+\|h_{2}\|_{2}^{2})
  |\varepsilon_{1}-\varepsilon_{2}|,~~\quad t>0.
\end{equation*}
Hence we have
\begin{equation*}
  \|S_{\varepsilon_{1}}(t)z_{0}-S_{\varepsilon_{2}}(t)z_{0}\|_{\mathcal{H}}\leq
  (\frac{1}{C_{B}}(e^{C_{B}t}-1)(\|h_{1}\|_{2}^{2}+\|h_{2}\|_{2}^{2}))^{\frac{1}{2}} |\varepsilon_{1}-\varepsilon_{2}|.
\end{equation*}
So the assumption (H3) holds.
As a conclusion, the equality \eqref{4.1} holds by Proposition 4.1.
\end{proof}

\begin{theorem}
 Suppose the assumptions of Theorem 3.1 hold. Then the family of global attractors $\mathcal{A}_{\varepsilon}$ is upper semicontinuous at $\varepsilon_0$, namely,
\begin{equation}\label{4.9}
   \lim_{\varepsilon\rightarrow\varepsilon_{0}}dist_{\mathcal{H}}
   (\mathcal{A}_{\varepsilon},\mathcal{A}_{\varepsilon_{0}})=0.
\end{equation}
\end{theorem}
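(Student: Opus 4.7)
The plan is to run the classical Hale--Raugel contradiction argument for upper semicontinuity, relying on two facts already obtained during the proof of Lemma 4.1: the uniform bound $\mathcal{A}_\varepsilon\subset\mathcal{B}$ valid for every $\varepsilon\in[0,1]$, where $\mathcal{B}=\{z\in\mathcal{H}:\|z\|_{\mathcal{H}}^{2}\leq C\}$ is independent of $\varepsilon$, together with the continuous dependence estimate
\[
\|S_{\varepsilon_1}(t)z_0-S_{\varepsilon_2}(t)z_0\|_{\mathcal{H}}\leq C_{t,\mathcal{B}}\,|\varepsilon_1-\varepsilon_2|^{1/2},
\]
which, on inspection of the Gronwall step in Lemma 4.1, holds uniformly for $z_0$ in any bounded subset of $\mathcal{H}$.

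First, I would argue by contradiction, assuming that \eqref{4.9} fails. Then there exist $\delta>0$, a sequence $\varepsilon_n\to\varepsilon_0$, and points $z_n\in\mathcal{A}_{\varepsilon_n}$ such that $\mathrm{dist}_{\mathcal{H}}(z_n,\mathcal{A}_{\varepsilon_0})\geq\delta$ for every $n$. Second, since $\mathcal{A}_{\varepsilon_0}$ attracts bounded sets in $\mathcal{H}$ and $\mathcal{B}$ is bounded, I can fix $T=T(\delta)>0$ large enough so that
\[
\mathrm{dist}_{\mathcal{H}}\bigl(S_{\varepsilon_0}(T)\mathcal{B},\,\mathcal{A}_{\varepsilon_0}\bigr)<\frac{\delta}{2}.
\]

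Third, by the strict invariance of each $\mathcal{A}_{\varepsilon_n}$ under $S_{\varepsilon_n}(\cdot)$, I select $w_n\in\mathcal{A}_{\varepsilon_n}\subset\mathcal{B}$ with $z_n=S_{\varepsilon_n}(T)w_n$. The triangle inequality then gives
\[
\mathrm{dist}_{\mathcal{H}}(z_n,\mathcal{A}_{\varepsilon_0})\leq\|S_{\varepsilon_n}(T)w_n-S_{\varepsilon_0}(T)w_n\|_{\mathcal{H}}+\mathrm{dist}_{\mathcal{H}}\bigl(S_{\varepsilon_0}(T)w_n,\,\mathcal{A}_{\varepsilon_0}\bigr).
\]
The first term is bounded by $C_{T,\mathcal{B}}|\varepsilon_n-\varepsilon_0|^{1/2}\to 0$ thanks to the estimate recalled above, applied uniformly in $w_n\in\mathcal{B}$; the second term is strictly less than $\delta/2$ by the choice of $T$. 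Taking $n$ sufficiently large yields $\mathrm{dist}_{\mathcal{H}}(z_n,\mathcal{A}_{\varepsilon_0})<\delta$, contradicting the standing hypothesis.

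I do not expect a genuine obstacle, because every ingredient has already been prepared earlier in the paper; the step that deserves the most care is checking that the continuity constant $C_{T,\mathcal{B}}$ produced in Lemma 4.1 does not depend on the particular trajectory within $\mathcal{B}$, which follows from the fact that the Gronwall constant there depends only on a bound for $\|S_{\varepsilon_i}(t)z_0\|_{\mathcal{H}}$ on $\mathcal{B}$. The attracting property of $\mathcal{A}_{\varepsilon_0}$ used in choosing $T$ is guaranteed by Theorem 3.1 applied at the parameter value $\varepsilon_0$, so no further regularity in $\varepsilon$ beyond that already proved is required.
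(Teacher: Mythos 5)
Your argument is correct, but it is not the route the paper takes. You run the classical Hale--Raugel scheme: uniform boundedness $\mathcal{A}_{\varepsilon}\subset\mathcal{B}$ (from the proof of (H2) in Lemma 4.1), continuity of $\varepsilon\mapsto S_{\varepsilon}(t)z_{0}$ uniformly on bounded sets (from the Gronwall estimate proving (H3), whose constant indeed depends only on a bound for the initial energy over $\mathcal{B}$, as you note), the strict invariance $S_{\varepsilon_n}(T)\mathcal{A}_{\varepsilon_n}=\mathcal{A}_{\varepsilon_n}$ to pull the contradicting points back by time $T$, and the attraction of $\mathcal{B}$ by $\mathcal{A}_{\varepsilon_{0}}$ to choose $T$. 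The paper instead starts from the same contradiction hypothesis but takes bounded \emph{full trajectories} $z^{n}$ through the points $z_{0}^{n}$, invokes the regularity estimate \eqref{3.12} to get a uniform bound in the smoother space $\mathcal{H}_{1}$, extracts a subsequence converging in $C([-T,T];\mathcal{H})$ via Simon's compactness theorem, and passes to the limit in the equations to identify the limit as a bounded full trajectory of the $\varepsilon_{0}$-system, hence a point of $\mathcal{A}_{\varepsilon_{0}}$. Your approach buys economy: it needs none of the attractor regularity of Theorem 3.1(4) nor any compactness theorem, only ingredients already isolated in Lemma 4.1, and it makes transparent that the rate of upper semicontinuity is controlled by the modulus of continuity in $\varepsilon$ together with the attraction rate. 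The paper's approach buys robustness: it would survive even without a uniform-in-$z_0$ continuous-dependence estimate, trading it for compactness of the union of attractors in $\mathcal{H}$, which is the standard fallback when the semigroup is only known to converge pointwise in the parameter. Both are complete here because the paper has already established all the hypotheses each method needs.
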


\begin{proof}
The argument is inspired by \cite{Ge2013,Ha1988}. Firstly, we suppose that \eqref{4.9} does not hold. Then there exist $\lambda>0$, the sequence $\varepsilon_{n}\rightarrow\varepsilon_{0}$ and $z_{0}^{n}\in \mathcal{A}_{\varepsilon^{n}}$ such that
\begin{equation}\label{4.10}
  dist_{\mathcal{H}}
   (z_{0}^{n},\mathcal{A}_{\varepsilon_{0}})\geq\lambda>0,~~\quad n\in\mathbb{N}.
\end{equation}

Let $z^{n}(t)=(v^{n}(t),p^{n}(t),v_{t}^{n}(t),p_{t}^{n}(t))^{T}$ be a bounded full trajectory from the attracator $\mathcal{A}_{\varepsilon^{n}}$ with  $z^{n}(0)=z_{0}^{n}$.
It follows from \eqref{3.12} that
${z^{n}}$ is bounded in $L^{\infty}(\mathbb{R};\mathcal{H}_{1})$.

Because of $\mathcal{H}_{1}\hookrightarrow\hookrightarrow\mathcal{H}$, applying Simon's Compactness Theorem (see \cite{Si1987}), we can get $z\in C([-T,T];\mathcal{H})$ and a subsequence $\{z^{n}\}$ such that,
$$z^{n}\rightarrow z\quad \text {in}\ C([-T,T];\mathcal{H}).$$
Then, we can conclude that
\begin{equation*}
  \sup_{t\in\mathbb{R}}{\|z(t)\|_{\mathcal{H}}}<\infty.
\end{equation*}
\hspace*{0.7cm}Let
$z(t)=(v(t),p(t),v_{t}(t),p_{t}(t))^{T}$ be a bounded full trajectory of the limiting semi-flow. Then, we can infer that $z$ solves the limiting equation ($\varepsilon=\varepsilon_{0}$), namely,
\begin{equation}\label{4.11}
\begin{cases}
\ \rho v_{tt}-\alpha v_{xx}+\gamma\beta p_{xx}+f_{1}(v,p)+g_{1}(v_{t})=\varepsilon_{0} h_{1},\\[2mm]
\mu p_{tt}-\beta p_{xx}+\gamma\beta v_{xx}+f_{2}(v,p)+g_{2}(p_{t})=\varepsilon_{0} h_{2}.\\[2mm]
\end{cases}
\end{equation}
In fact, from \eqref{1.1}, we can get $z^{n}$ satisfies
\begin{equation}\label{4.12}
\begin{cases}
\ \rho v^{n}_{tt}-\alpha v^{n}_{xx}+\gamma\beta p^{n}_{xx}+f_{1}(v^{n},p^{n})+g_{1}(v^{n}_{t})=\varepsilon^{n} h_{1},\\[2mm]
\mu p^{n}_{tt}-\beta p^{n}_{xx}+\gamma\beta v^{n}_{xx}+f_{2}(v^{n},p^{n})+g_{2}(p^{n}_{t})=\varepsilon^{n} h_{2}.\\[2mm]
\end{cases}
\end{equation}
we can use the same argument as in the proof of (H3) in Lemma 4.1, so we can infer that \eqref{4.11} is the limit of \eqref{4.12} as $n\rightarrow\infty$.\\
As a result, we have
\begin{equation*}
z_{0}^{n} \rightarrow z(0) \in \mathcal{A}_{\varepsilon_{0}},
\end{equation*}
 which contradicts \eqref{4.10}. The proof is complete.
\end{proof}

\section*{Acknowledgments}The authors would like to thank the referees for the careful reading of this
paper. This project is supported by NSFC (No. 11801145 and No. 12101189), the
Innovative Funds Plan of Henan University of Technology 2020ZKCJ09 and  the Fund of Young Backbone Teacher in Henan Province (No.2018GGJS068).


\end{document}